\author[P.~Leonetti]{Paolo Leonetti}
\address{Department of Economics, Universit\`a degli Studi dell'Insubria, via Monte Generoso 71, 21100 Varese, Italy}
\email{leonetti.paolo@gmail.com}
\author[C.~Orhan]{Cihan Orhan}
\address{Department of Mathematics, Faculty of Science, Ankara University, 06100 Tandogan Ankara, Turkey} %
\email{orhan@science.ankara.edu.tr}
\keywords{Generalized limits; 
ideals on $\omega$; 
positive linear functional on $\ell_\infty$; 
positive extensions of limit functional.
}
\subjclass[2010]{
Primary: 47B65, 54D35. Secondary: 40A35, 54A20. 
}
\title{On Generalized Limits and Ultrafilters}
   \def\MR#1{}
\newtheorem{thm}{Theorem}[section]
\newtheorem{cor}[thm]{Corollary}
\newtheorem{lem}[thm]{Lemma}
\newtheorem{prop}[thm]{Proposition}
\theoremstyle{definition} 
\newtheorem{defi}[thm]{Definition}
\let\olddefi\defi
\renewcommand{\defi}{\olddefi\normalfont}
\let\oldquestion\question
\renewcommand{\question}{\oldquestion\normalfont}
\let\oldexample\example
\renewcommand{\example}{\oldexample\normalfont}
\newtheorem{rmk}[thm]{Remark}
\let\oldrmk\rmk
\renewcommand{\rmk}{\oldrmk\normalfont}
\providecommand{\MR}[1]{}
\providecommand{\bysame}{\leavevmode\hbox to3em{\hrulefill}\thinspace}
\providecommand{\MR}{\relax\ifhmode\unskip\space\fi MR }
\begin{document}

\maketitle
\thispagestyle{empty}

\begin{abstract} 
Given an ideal $\mathcal{I}$ on $\omega$, 
we denote by $\mathrm{SL}(\mathcal{I})$ the family of positive normalized linear functionals on $\ell_\infty$ which 
assign value $0$ to all 
characteristic sequences of sets in $\mathcal{I}$. 
We show that every element of $\mathrm{SL}(\mathcal{I})$ is a Choquet average of certain ultrafilter limit functionals. Also, we prove that the diameter of $\mathrm{SL}(\mathcal{I})$ is $2$ if and only if $\mathcal{I}$ is not maximal, and that the latter claim can be considerably strengthened if $\mathcal{I}$ is meager. Lastly, we provide several applications: for instance, recovering a result of Freedman in [Bull.~Lond.~Math.~Soc.~\textbf{13} (1981), 224--228], we show that the family of bounded sequences for which all functionals in $\mathrm{SL}(\mathcal{I})$ assign the same value coincides with the closed vector space of bounded $\mathcal{I}$-convergent sequences. 
\end{abstract}


\section{Introduction}\label{sec:intro}

One of the main purpose of operator theory is to give some representations of certain subsets of positive normalized linear functionals on the space of bounded sequences. In the present paper we show, among other things, that every such element is a Choquet average of certain ultrafilter limit functionals.

For the sake of clarity, some notations are in order. Let $\mathcal{I}$ be an ideal on the nonnegative integers $\omega$, that is, a family of subsets of $\omega$ which is stable under finite unions and subsets. Unless otherwise stated, it is also assumed that $\omega\notin \mathcal{I}$, and that the ideal of finite sets $\mathrm{Fin}:=[\omega]^{<\omega}$ is contained in $\mathcal{I}$. Ideals are regarded as subsets of the Cantor space $\{0,1\}^\omega$, hence we can speak about their topological complexity. An ideal $\mathcal{I}$ is a $P$-ideal if it is $\sigma$-directed modulo finite sets, i.e., for every sequence $(A_n: n \in \omega)$ in $\mathcal{I}$ there exists $A \in \mathcal{I}$ such that $A_n\setminus A$ is finite for all $n \in \omega$. For instance, the family of asymptotic density zero sets 
$$
\mathcal{Z}:=\left\{A\subseteq \omega: \lim_{n\to \infty}\frac{|A\cap [0,n]|}{n+1}=0\right\}$$
is a $F_{\sigma\delta}$ $P$-ideal on $\omega$, see e.g. \cite{MR1711328}. 


We consider also the vector space of bounded real sequences $\ell_\infty$ and all its subspaces with the supremum norm and the product pointwise order (so that $\ell_\infty$ is a Banach lattice), and we write $e:=(1,1,\ldots)$. 

The following notion has been introducted by Freedman in \cite[Section 3]{MR614658}:

\begin{defi}\label{defi:SLI}
    A linear functional $f: \ell_\infty\to \mathbf{R}$ is said to be a $\mathrm{S}_{\mathcal{I}}$\emph{-limit} if:
    \begin{enumerate}[label={\rm (\roman{*})}]
    \item \label{item:1SI} $f$ is positive (i.e., $f(x)\ge 0$ for all $x \in \ell_\infty$ with $x\ge 0$); 
    \item \label{item:2SI} $f$ extends the limit functionals (i.e., $f(x)=\lim x$ for all $x \in c$);
    \item \label{item:3SI} $f(\bm{1}_A)=0$ for all $A \in \mathcal{I}$. 
    \end{enumerate}
The family of $\mathrm{S}_{\mathcal{I}}$-limits is denoted by $\mathrm{SL}(\mathcal{I})$. 
\end{defi}


Special instances of Definition \ref{defi:SLI} can be found also in \cite[Definition 1]{MR4109239} and \cite[Definition 1.1]{MR3513160}: in fact, the families $\mathcal{I}$ considered in the latter works are sufficiently \textquotedblleft\,well behaved,\textquotedblright\, cf. e.g. \cite[Proposition 13]{MR3405547} and \cite{MR4041540, MR4756411}. In the case $\mathcal{I}=\mathcal{Z}$, positive linear functionals extending the Ces\`{a}ro mean (which is stronger than property \ref{item:2SI}) have been studied in \cite{MR3278191}. On a different direction, Banach limits are simply elements of $\mathrm{SL}(\mathrm{Fin})$ which are, in addition, translation invariant, see e.g. \cite{MR352932, MR2659770}. 

Some observations are in order. First, the family $\mathrm{SL}(\mathcal{I})$ is nonempty: in fact, if $\mathcal{J}$ is a maximal ideal containing $\mathcal{I}$, then the linear functional $f_{\mathcal{J}}$ defined by 
\begin{equation}\label{eq:fJ}
\forall x \in \ell_\infty, \quad 
f_{\mathcal{J}}(x):=\mathcal{J}\text{-}\lim x
\end{equation}
is a $\mathrm{S}_{\mathcal{I}}$-limit (it is well known that $f_{\mathcal{J}}$ is well defined). 

Second, properties \ref{item:1SI}--\ref{item:3SI} in Definition \ref{defi:SLI} are independent of each other. For, if $\mathcal{I}\neq \mathrm{Fin}$, pick an infinite $A \in \mathcal{I}$ (hence, $A$ is not cofinite) and let $\mathcal{J}$ be a maximal ideal containing $\mathrm{Fin} \cup \{\omega\setminus A\}$. Then the linear operator $f_{\mathcal{J}}$ defined in \eqref{eq:fJ} satisfies \ref{item:1SI} and \ref{item:2SI}, but property \ref{item:3SI} fails. Also, trivially, $f=0$ satisfies \ref{item:1SI} and \ref{item:3SI}, but property \ref{item:2SI} fails. Lastly, to prove the independence of property \ref{item:1SI}, suppose that $\mathcal{I}$ is not maximal, hence it is possible to pick two distinct maximal ideals $\mathcal{J}_1$, $\mathcal{J}_2$ containing $\mathcal{I}$. Fix $A\in \mathcal{J}_1\setminus \mathcal{J}_2$. Then $f:=f_{\mathcal{J}_1}-f_{\mathcal{J}_2}$ satisfies \ref{item:2SI} and \ref{item:3SI}, while property \ref{item:1SI} fails since $f(\bm{1}_A)=-1$.




Third, it is easy to see that, if $f$ is a $\mathrm{S}_{\mathcal{I}}$-limit, then
\begin{enumerate}[label={\rm (\roman{*}$^\prime$)}]
    \item \label{item:1SIprime} $f$ is continuous and $\|f\|=f(e)=1$;
\end{enumerate}
cf. e.g. \cite[Theorem 4.3]{MR2262133}. In fact, also the converse holds, namely, if $f$ is a linear functional on $\ell_\infty$ satisfying \ref{item:1SIprime}, \ref{item:2SI}, and \ref{item:3SI}, then also \ref{item:1SI} holds. For, suppose that $f$ is not positive, hence there exists $x\ge 0$ such that $\alpha:=f(x)<0$. Pick a sufficiently small $t>0$ such that $\|e-t x\|\in (0,1]$. Then 
$$
1=\|f\| \ge f(e-t x)/\|e-tx\|\ge f(e-tx)=1-t\alpha>1,
$$
which provides the required contradiction; cf. also Remark \ref{rmk:equivalentdefi} below for additional equivalent definitions of $S_{\mathcal{I}}$-limits. 
We remark that a linear functional $f: \ell_\infty\to \mathbf{R}$ satisfying \ref{item:1SIprime} and \ref{item:2SI} has been termed \textquotedblleft extended limit\textquotedblright\, by Bennett and Kalton in \cite[Section 2]{MR352932}.

With the above premises, following Freedman \cite[Section 1]{MR614658}, we consider the vector space $\mathscr{V}(\mathcal{I})$ of sequences $x \in \ell_\infty$ for which all $S_{\mathcal{I}}$-limits assign the same value: 
\begin{defi}\label{defi:VI}
Given an ideal $\mathcal{I}$ on $\omega$, define
$$
\mathscr{V}(\mathcal{I}):=\left\{x \in \ell_\infty:  f(x)=g(x) \text{ for all }f,g \in \mathrm{SL}(\mathcal{I})\right\}.
$$
\end{defi}

Given an ideal $\mathcal{I}$ on $\omega$, a (not necessarily bounded) real sequence $x \in \mathbf{R}^\omega$ is said to be $\mathcal{I}$\emph{-convergent to} $\eta \in \mathbf{R}$, shortened as $\mathcal{I}\text{-}\lim x=\eta$, if $\{n \in \omega: |x_n-\eta|\ge \varepsilon\} \in \mathcal{I}$ for all $\varepsilon>0$. The vector space of $\mathcal{I}$-convergent sequences is denoted by $c(\mathcal{I})$. Let also $c_{00}(\mathcal{I})$ be the set of sequences which are supported on $\mathcal{I}$. 

Similarly, a real sequence $x \in \mathbf{R}^\omega$ is said to be $\mathcal{I}^\star$\emph{-convergent to} $\eta \in \mathbf{R}$, shortened as $\mathcal{I}^\star\text{-}\lim x=\eta$, if there exists $A \in \mathcal{I}$ such that the subsequence $(x_n: n \in \omega\setminus A)$ is convergent (in the ordinary sense) to $\eta$. The vector space of $\mathcal{I}$-convergent sequences is denoted by $c(\mathcal{I}^\star)$. It is folklore that $\mathcal{I}^\star$ convergence is stronger than $\mathcal{I}$-convergence, and that they coincide if and only if $\mathcal{I}$ is a $P$-ideal. 

Freedman proved in \cite{MR614658} that $\mathscr{V}(\mathcal{I})$ coincides with the closure of the vector space of bounded $\mathcal{I}^\star$-convergent sequences, namely, $\mathscr{V}(\mathcal{I})=\overline{c(\mathcal{I}^\star) \cap \ell_\infty}$. Here, as an application of a representation of $S_{\mathcal{I}}$-limits as Choquet averages of the functionals $f_{\mathcal{J}}$ defined in \eqref{eq:fJ} (see Theorem \ref{thm:firstrepresenation} below for details), we recover Freedman's result through a simpler and direct proof. 
\begin{thm}\label{thm:freedman}
    Let $\mathcal{I}$ be an ideal on $\omega$. Then $\mathscr{V}(\mathcal{I})=\overline{c(\mathcal{I}^\star) \cap \ell_\infty}=c(\mathcal{I}) \cap \ell_\infty$. 
    
    If, in addition, $\mathcal{I}$ is a $P$-ideal, then also $\mathscr{V}(\mathcal{I})=c+(c_{00}(\mathcal{I})\cap \ell_\infty)$. 
\end{thm}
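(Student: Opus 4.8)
The plan is to use Theorem~\ref{thm:firstrepresenation} to reduce statements about all of $\mathrm{SL}(\mathcal{I})$ to statements about the extreme functionals $f_{\mathcal{J}}$ of \eqref{eq:fJ}, and then to translate between the functional-analytic and the combinatorial pictures via $\mathcal{I}$-cluster points. First I would record the basic dictionary: for $x\in\ell_\infty$, the set $\Gamma_x:=\{f_{\mathcal{J}}(x)\colon \mathcal{J}\supseteq\mathcal{I}\text{ a maximal ideal}\}$ equals the set of $\mathcal{I}$-cluster points of $x$ --- those $\eta\in\mathbf{R}$ with $\{n\colon|x_n-\eta|<\varepsilon\}\notin\mathcal{I}$ for every $\varepsilon>0$ --- and is nonempty. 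Indeed, each maximal ideal $\mathcal{J}\supseteq\mathcal{I}$ is dual to an ultrafilter disjoint from $\mathcal{I}$ along which $f_{\mathcal{J}}(x)$ is the ultrafilter limit of $x$, which is visibly an $\mathcal{I}$-cluster point; conversely, if $\eta$ is an $\mathcal{I}$-cluster point then the sets $\{n\colon|x_n-\eta|<1/k\}$, $k\in\omega$, together with the complements of the members of $\mathcal{I}$, generate a proper filter (its properness is precisely the cluster-point condition, using $\omega\notin\mathcal{I}$), and any ultrafilter extending it witnesses $\eta$ as an ultrafilter limit of $x$. A short argument then yields the key equivalence: $\Gamma_x=\{\eta\}$ is a singleton if and only if $\mathcal{I}\text{-}\lim x=\eta$; the nontrivial implication is that if $\mathcal{I}\text{-}\lim x=\eta$ fails, then some $\{n\colon|x_n-\eta|\ge\varepsilon\}$ lies outside $\mathcal{I}$, and the same filter-extension construction then produces a second cluster point at distance $\ge\varepsilon$ from $\eta$.

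With this in hand, $\mathscr{V}(\mathcal{I})=c(\mathcal{I})\cap\ell_\infty$ follows in two moves. If $x\in\mathscr{V}(\mathcal{I})$, then since each $f_{\mathcal{J}}$ lies in $\mathrm{SL}(\mathcal{I})$ the value $f_{\mathcal{J}}(x)$ is independent of $\mathcal{J}$, so $\Gamma_x$ is a singleton and $x\in c(\mathcal{I})\cap\ell_\infty$. Conversely, if $x\in c(\mathcal{I})\cap\ell_\infty$ with $\mathcal{I}\text{-}\lim x=\eta$, then $f_{\mathcal{J}}(x)=\eta$ for every maximal $\mathcal{J}\supseteq\mathcal{I}$, so by Theorem~\ref{thm:firstrepresenation} every $f\in\mathrm{SL}(\mathcal{I})$ --- being a Choquet average of such $f_{\mathcal{J}}$ --- also satisfies $f(x)=\eta$; equivalently, one may argue directly from positivity, $|f(x)-\eta|=|f(x-\eta e)|\le f(|x-\eta e|)\le\varepsilon f(e)+\|x-\eta e\|_\infty\,f(\bm{1}_A)=\varepsilon$ with $A:=\{n\colon|x_n-\eta|\ge\varepsilon\}\in\mathcal{I}$, and let $\varepsilon\downarrow 0$.

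Next I would prove $c(\mathcal{I})\cap\ell_\infty=\overline{c(\mathcal{I}^\star)\cap\ell_\infty}$. For ``$\supseteq$'' I would combine the folklore inclusion $c(\mathcal{I}^\star)\subseteq c(\mathcal{I})$ with the norm-closedness of $c(\mathcal{I})\cap\ell_\infty$: if $x^{(k)}\to x$ uniformly with $\mathcal{I}\text{-}\lim x^{(k)}=\eta_k$, then any two sets whose complements lie in $\mathcal{I}$ must intersect (as $\omega\notin\mathcal{I}$), so $|\eta_k-\eta_j|\le\|x^{(k)}-x^{(j)}\|_\infty$, whence $\eta_k\to\eta$ and $\{n\colon|x_n-\eta|\ge\varepsilon\}\subseteq\{n\colon|x^{(k)}_n-\eta_k|\ge\varepsilon/3\}\in\mathcal{I}$ for $k$ large. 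For ``$\subseteq$'', given $x\in c(\mathcal{I})\cap\ell_\infty$ with $\mathcal{I}\text{-}\lim x=\eta$ and $k\in\omega$, set $A_k:=\{n\colon|x_n-\eta|\ge1/k\}\in\mathcal{I}$ and let $y^{(k)}$ agree with $x$ on $A_k$ and be constantly $\eta$ off $A_k$; then $y^{(k)}\in c+(c_{00}(\mathcal{I})\cap\ell_\infty)\subseteq c(\mathcal{I}^\star)\cap\ell_\infty$ and $\|x-y^{(k)}\|_\infty\le1/k$, so $x\in\overline{c(\mathcal{I}^\star)\cap\ell_\infty}$.

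Finally, the very same splitting shows that \emph{any} $x\in c(\mathcal{I}^\star)\cap\ell_\infty$ --- pick $A\in\mathcal{I}$ with $(x_n\colon n\notin A)$ convergent to some $\eta$, let $u$ agree with $x$ off $A$ and equal $\eta$ on $A$, and put $v:=x-u$ --- decomposes as $u+v$ with $u\in c$ and $v\in c_{00}(\mathcal{I})\cap\ell_\infty$; since the reverse inclusion is trivial, $c(\mathcal{I}^\star)\cap\ell_\infty=c+(c_{00}(\mathcal{I})\cap\ell_\infty)$ for every ideal. When $\mathcal{I}$ is a $P$-ideal one has $c(\mathcal{I}^\star)=c(\mathcal{I})$, so combining with the above, $\mathscr{V}(\mathcal{I})=c(\mathcal{I})\cap\ell_\infty=c(\mathcal{I}^\star)\cap\ell_\infty=c+(c_{00}(\mathcal{I})\cap\ell_\infty)$. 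I expect the main obstacle to be the first paragraph --- cleanly establishing the identification $\Gamma_x=\{f_{\mathcal{J}}(x)\colon\mathcal{J}\supseteq\mathcal{I}\}$, the singleton/convergence equivalence, and the correct invocation of Theorem~\ref{thm:firstrepresenation} --- whereas the closedness and approximation steps are routine.
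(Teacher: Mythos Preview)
Your proposal is correct and follows essentially the same route as the paper: identify $\{f_{\mathcal{J}}(x):\mathcal{J}\supseteq\mathcal{I}\ \text{maximal}\}$ with the set of $\mathcal{I}$-cluster points (the paper's Theorem~\ref{thm:Iclusterpoints}), deduce $\mathscr{V}(\mathcal{I})=c(\mathcal{I})\cap\ell_\infty$ via Theorem~\ref{thm:firstrepresenation}, and handle the $P$-ideal case combinatorially. The only differences are cosmetic: you prove inline what the paper cites externally (the equality $\overline{c(\mathcal{I}^\star)\cap\ell_\infty}=c(\mathcal{I})\cap\ell_\infty$ and the cluster-point dictionary), and you supply a direct positivity estimate for the inclusion $c(\mathcal{I})\cap\ell_\infty\subseteq\mathscr{V}(\mathcal{I})$ as an alternative to the Choquet-integral argument.
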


In our main results, we will provide characterizations of $\mathrm{SL}(\mathcal{I})$, some structural properties, and related results. The proof of Theorem \ref{thm:freedman} is given at the end of Section \ref{sec:proofs}.

\section{Main results}\label{sec:mainresults}

Consider the Stone--\u{C}ech compactification $\beta\omega$ of the nonnegative integers $\omega$, and recall that it is homeomorphic to the space of ultrafilters $\mathcal{F}$ on $\omega$, which we still denote by $\beta\omega$ and is topologised by the base of clopen subsets $\{\{\mathcal{F} \in \beta\omega: A \in \mathcal{F}\}: A\subseteq \omega\}$. For each ultrafilter $\mathcal{F}$, we write 
$
\mathcal{J}_{\mathcal{F}}:=\{A\subseteq \omega: \omega\setminus A \in \mathcal{F}\}
$ 
for its associated maximal ideal. 
To ease the notation, we will use $\mathcal{J}_{\mathcal{F}}\text{-}\lim x$ or ${\mathcal{F}}\text{-}\lim x$ interchangeably, and, similarly, $f_{\mathcal{F}}$ in place of $f_{\mathcal{J}_{\mathcal{F}}}$. 
Also, by $\mathrm{Ult}(\mathcal{I})$ we denote the compact subspace of free ultrafilters  which contain the dual filter of a given ideal $\mathcal{I}$, that is, equivalently, 
$$
\mathrm{Ult}(\mathcal{I}):=\{\mathcal{F} \in \beta\omega: \mathcal{I}\subseteq \mathcal{J}_{\mathcal{F}}\}.
$$
The space $\mathrm{Ult}(\mathcal{I})$ is endowed with its relative topology. 
Note that the subspace $\mathrm{Ult}(\mathcal{I})$, sometimes called \textquotedblleft support set,\textquotedblright
\hspace{.5mm}has been introduced in Henriksen \cite{MR108720} and further studied in \cite{MR271752, MR1095221, 
MR1372186, MR279609} in the context of ideals generated by nonnegative regular summability matrices, cf. also \cite{MR3405547, MR4693672, MR3883309}.

In our first representation of $\mathrm{SL}(\mathcal{I})$, we need to recall the notion of Choquet integral. For, given a measurable space $(S,\Sigma)$ and a normalized capacity $\nu: \Sigma\to \mathbf{R}$, we define the \emph{Choquet integral} of a bounded $\Sigma$-measurable function $x: S\to \mathbf{R}$ with respect to $\nu$ as the quantity 
$$
\int x\,\mathrm{d}\nu:=\int_0^\infty \nu(x\ge t)\,\mathrm{d}t+\int_{-\infty}^0 [\nu(x\ge t)-\nu(S)]\,\mathrm{d}t,
$$
where the integrals on the right hand side are meant to be improper Riemann
integrals. This naturally generalizes the standard notion of integral since the two coincide when $\nu$ is finitely additive, see \cite{MR4693672} and references therein. 

In addition, given a topological space $X$, denote by $\mathscr{B}(X)$ its Borel $\sigma$-algebra (recall that if a subspace $Y\subseteq X$ is endowed with its relative topology then $\mathscr{B}(Y)=\{A\cap Y: A \in \mathscr{B}(X)\}$).
\begin{thm}\label{thm:firstrepresenation}
Let $\mathcal{I}$ be an ideal on $\omega$. Then, for each $f \in \mathrm{SL}(\mathcal{I})$, there exists a normalized capacity $\rho: \mathscr{B}(\mathrm{Ult}(\mathcal{I}))\to \mathbf{R}$ such that 
\begin{equation}\label{eq:firstrepresetnation}
\forall x \in \ell_\infty, \quad 
f(x)=\int_{\mathrm{Ult}(\mathcal{I})} {\mathcal{F}}\text{-}\lim x \, \mathrm{d}\rho(\mathcal{F}). 
\end{equation}
\end{thm}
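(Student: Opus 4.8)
The plan is to realize $f$ as a Choquet integral by first transporting it to the space $C(\mathrm{Ult}(\mathcal{I}))$ via the Gelfand-type map $x \mapsto \hat{x}$, where $\hat{x}(\mathcal{F}) := \mathcal{F}\text{-}\lim x$, and then applying a Choquet-integral representation theorem for monotone functionals. First I would check that the map $T: \ell_\infty \to C(\mathrm{Ult}(\mathcal{I}))$, $Tx := \hat{x}$, is well defined: each $\hat{x}$ is continuous on $\beta\omega$ (this is the standard fact that $x \mapsto \mathcal{F}\text{-}\lim x$ extends $x$ continuously to the Stone--\v{C}ech compactification), and $\mathrm{Ult}(\mathcal{I})$ is a closed, hence compact, subspace, so $\hat{x}\res\mathrm{Ult}(\mathcal{I}) \in C(\mathrm{Ult}(\mathcal{I}))$. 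The map $T$ is a positive linear lattice homomorphism with $Te = \bm{1}$, and $\|Tx\|_\infty \le \|x\|_\infty$.

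Next I would observe that $f$ factors through $T$: if $Tx = Ty$, i.e.\ $\mathcal{F}\text{-}\lim x = \mathcal{F}\text{-}\lim y$ for every $\mathcal{F} \in \mathrm{Ult}(\mathcal{I})$, then $\mathcal{F}\text{-}\lim(x-y) = 0$ for all such $\mathcal{F}$; I must deduce $f(x-y) = 0$. This is where property \ref{item:3SI} enters, together with the fact that $f$ is a positive functional of norm one extending limits. The key auxiliary claim is: if $z \in \ell_\infty$ has $\mathcal{F}\text{-}\lim z = 0$ for all $\mathcal{F} \in \mathrm{Ult}(\mathcal{I})$, then $z \in \mathscr{V}(\mathcal{I})$ with common value $0$ — indeed, by Theorem~\ref{thm:freedman} such $z$ lies in $c(\mathcal{I}) \cap \ell_\infty$ (for each $\ve > 0$ the set $\{n : |z_n| \ge \ve\}$ meets no ultrafilter in $\mathrm{Ult}(\mathcal{I})$, hence belongs to $\mathcal{I}$), and then $f(z) = \mathcal{I}\text{-}\lim z = 0$. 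This lets us define a functional $\phi$ on the image $T(\ell_\infty) \subseteq C(\mathrm{Ult}(\mathcal{I}))$ by $\phi(Tx) := f(x)$, which is well defined, positive, linear, of norm one, and sends $\bm{1}$ to $1$; by Hahn--Banach (or simply because $T(\ell_\infty)$ is a sublattice containing constants) extend it to a positive normalized linear functional $\Phi$ on all of $C(\mathrm{Ult}(\mathrm{I}))$. By the Riesz representation theorem $\Phi(h) = \int_{\mathrm{Ult}(\mathcal{I})} h \, \mathrm{d}\mu$ for a Borel probability measure $\mu$ on $\mathrm{Ult}(\mathcal{I})$.

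Finally, set $\rho := \mu$ viewed as a capacity on $\mathscr{B}(\mathrm{Ult}(\mathcal{I}))$; it is normalized, and since $\mu$ is (in particular) a capacity the Choquet integral against it agrees with the ordinary Lebesgue integral. Then for every $x \in \ell_\infty$,
$$
f(x) = \phi(Tx) = \Phi(\hat{x}\res\mathrm{Ult}(\mathcal{I})) = \int_{\mathrm{Ult}(\mathcal{I})} \hat{x}(\mathcal{F}) \, \mathrm{d}\mu(\mathcal{F}) = \int_{\mathrm{Ult}(\mathcal{I})} \mathcal{F}\text{-}\lim x \, \mathrm{d}\rho(\mathcal{F}),
$$
which is \eqref{eq:firstrepresetnation}. (One should double-check measurability of $\mathcal{F} \mapsto \mathcal{F}\text{-}\lim x$ with respect to $\mathscr{B}(\mathrm{Ult}(\mathcal{I}))$, but this is immediate since $\hat{x}$ is continuous there.)

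The main obstacle is the factorization step — proving that $Tx = Ty$ forces $f(x) = f(y)$ — which is precisely the content that makes the ultrafilters in $\mathrm{Ult}(\mathcal{I})$ "enough" to detect $f$; it hinges on the identification, via Theorem~\ref{thm:freedman}, of the sequences killed by all $\mathcal{F}\text{-}\lim$ ($\mathcal{F} \in \mathrm{Ult}(\mathcal{I})$) with a subset of $c(\mathcal{I}) \cap \ell_\infty$ on which every $S_{\mathcal{I}}$-limit is forced to take the $\mathcal{I}$-limit value. Everything else is a routine assembly of standard facts: continuity of $\hat{x}$ on $\beta\omega$, compactness of $\mathrm{Ult}(\mathcal{I})$, Riesz representation, and the coincidence of the Choquet integral with the Lebesgue integral for genuine measures.
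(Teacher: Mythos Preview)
Your argument is correct and in fact delivers a \emph{stronger} conclusion than the paper's: you produce a regular Borel probability measure on $\mathrm{Ult}(\mathcal{I})$, whereas the statement only asks for a normalized capacity. The paper proceeds quite differently: it identifies $f$ with the finitely additive probability $\mu:=T(f)\in ba$ via \eqref{eq:latticeisomorphism}, observes that $\mu$ vanishes on $\mathcal{I}$, and then invokes as a black box \cite[Theorem 1.1]{MR4693672}, which represents such $\mu$ as a Choquet average of the ultrafilter measures $\mu_{\mathcal{F}}$ over $\mathrm{Ult}(\mathcal{I})$. Your route is more self-contained --- factor $f$ through the restriction map $\ell_\infty\cong C(\beta\omega)\to C(\mathrm{Ult}(\mathcal{I}))$ and apply Riesz --- and makes transparent that the capacity can always be taken countably additive. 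One caveat on logical dependencies: you cite Theorem~\ref{thm:freedman} for the factorization step, but in the paper that theorem is \emph{derived from} Theorem~\ref{thm:firstrepresenation}, so this would be circular. Fortunately your parenthetical already contains the genuine argument: if $\hat z$ vanishes on $\mathrm{Ult}(\mathcal{I})$ then for every $\varepsilon>0$ the set $A_\varepsilon:=\{n:|z_n|\ge\varepsilon\}$ satisfies $\tilde{A}_\varepsilon\cap\mathrm{Ult}(\mathcal{I})=\emptyset$, whence $A_\varepsilon\in\mathcal{I}$ by Lemma~\ref{lem:bndensityzero}; then $|z|\le \|z\|\,\bm{1}_{A_\varepsilon}+\varepsilon e$ and positivity plus property~\ref{item:3SI} give $|f(z)|\le f(|z|)\le\varepsilon$. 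So drop the reference to Theorem~\ref{thm:freedman} and keep the direct argument.
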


In our second representation of $\mathrm{SL}(\mathcal{I})$, we endow the norm dual $\ell_{\infty }^{\,\prime}$ with the weak%
$^{\star}$ topology. Hence, for each $Y\subseteq \ell_{\infty }^{\,\prime}$, $\overline{\mathrm{co}}(Y)$ stands for the the weak%
$^{\star}$ closed convex hull of $Y$. 
Recall also that
$f_{\mathcal{F}}(x):=\mathcal{F}\text{-}\lim x$ for each $x \in \ell_\infty$ and $\mathcal{F} \in \beta\omega$.
\begin{thm}\label{thm:secondrepresenation}
Let $\mathcal{I}$ be an ideal on $\omega$. Then
    $$
    \mathrm{SL}(\mathcal{I})=\overline{\mathrm{co}}(\{f_{{\mathcal{F}}}: \mathcal{F} \in \mathrm{Ult}(\mathcal{I})\}).
    $$
\end{thm}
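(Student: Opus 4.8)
The plan is to prove the two inclusions separately, using Theorem \ref{thm:firstrepresenation} for the harder one. For the inclusion $\overline{\mathrm{co}}(\{f_{\mathcal F}: \mathcal F \in \mathrm{Ult}(\mathcal I)\}) \subseteq \mathrm{SL}(\mathcal I)$, I would first observe that each $f_{\mathcal F}$ with $\mathcal F \in \mathrm{Ult}(\mathcal I)$ belongs to $\mathrm{SL}(\mathcal I)$: positivity and the extension of ordinary limits are standard properties of ultrafilter limits, and $f_{\mathcal F}(\bm 1_A) = 0$ for $A \in \mathcal I$ holds precisely because $\mathcal I \subseteq \mathcal J_{\mathcal F}$, i.e.\ $\omega \setminus A \in \mathcal F$. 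Then I would note that $\mathrm{SL}(\mathcal I)$ is convex (immediate from linearity of the defining conditions) and weak$^\star$ closed: conditions \ref{item:1SI}, \ref{item:2SI}, \ref{item:3SI} are each of the form $f(x) \ge 0$ or $f(x) = \text{const}$ for fixed $x \in \ell_\infty$, hence define weak$^\star$ closed sets, and an arbitrary intersection of such sets is weak$^\star$ closed. Therefore $\mathrm{SL}(\mathcal I)$ contains the weak$^\star$ closed convex hull of the $f_{\mathcal F}$.

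For the reverse inclusion $\mathrm{SL}(\mathcal I) \subseteq \overline{\mathrm{co}}(\{f_{\mathcal F}: \mathcal F \in \mathrm{Ult}(\mathcal I)\})$, I would argue by contradiction using Hahn--Banach separation in the locally convex space $(\ell_\infty^{\,\prime}, \text{weak}^\star)$. Suppose some $f \in \mathrm{SL}(\mathcal I)$ is not in the (nonempty, convex, weak$^\star$ closed) set $K := \overline{\mathrm{co}}(\{f_{\mathcal F}: \mathcal F \in \mathrm{Ult}(\mathcal I)\})$. Since the continuous dual of $\ell_\infty^{\,\prime}$ with the weak$^\star$ topology is exactly $\ell_\infty$ (evaluation functionals), there exist $x \in \ell_\infty$ and $\gamma \in \mathbf R$ with $f(x) > \gamma \ge g(x)$ for all $g \in K$; in particular $\mathcal F\text{-}\lim x \le \gamma$ for every $\mathcal F \in \mathrm{Ult}(\mathcal I)$. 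Now I invoke Theorem \ref{thm:firstrepresenation}: there is a normalized capacity $\rho$ on $\mathscr B(\mathrm{Ult}(\mathcal I))$ with $f(x) = \int_{\mathrm{Ult}(\mathcal I)} \mathcal F\text{-}\lim x \, \mathrm d\rho(\mathcal F)$. Since the integrand is bounded above by $\gamma$ pointwise, monotonicity of the Choquet integral together with $\rho(\mathrm{Ult}(\mathcal I)) = 1$ gives $f(x) \le \gamma$, contradicting $f(x) > \gamma$. Hence no such $f$ exists and the inclusion holds.

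The one point requiring a little care is the monotonicity step for the Choquet integral: I would record that if $x \le \gamma e$ then from the defining formula $\int x \, \mathrm d\nu = \int_0^\infty \nu(x \ge t)\,\mathrm dt + \int_{-\infty}^0 [\nu(x \ge t) - \nu(S)]\,\mathrm dt$ one gets $\int x\,\mathrm d\nu \le \int \gamma e\,\mathrm d\nu = \gamma\,\nu(S)$, using that $\nu(x \ge t) \le \nu(\gamma e \ge t)$ for all $t$ by monotonicity of the capacity, and that constants integrate to $\gamma \nu(S)$; with $\nu(S) = 1$ this yields the bound. I do not expect any genuine obstacle here — the main content of the theorem is already carried by Theorem \ref{thm:firstrepresenation}, and the present argument is essentially a clean separation-plus-monotonicity packaging of it.
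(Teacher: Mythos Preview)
Your argument is correct, but the paper takes a different route. Rather than invoking Theorem \ref{thm:firstrepresenation} and separating, the paper observes that $\mathrm{SL}(\mathcal{I})$ is weak$^\star$ compact (Alaoglu) and convex, identifies its extreme points as exactly $\{f_{\mathcal{F}}: \mathcal{F} \in \mathrm{Ult}(\mathcal{I})\}$ via the lattice isomorphism $T$ with $ba$ and an external result on extreme points of $\mathcal{I}$-invariant probability charges, and then applies Krein--Milman. Your approach has the virtue of being a clean corollary of Theorem \ref{thm:firstrepresenation}: once the Choquet representation is in hand, Hahn--Banach separation plus monotonicity of the Choquet integral finishes the job without needing to know what the extreme points are. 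The paper's approach, by contrast, is logically independent of Theorem \ref{thm:firstrepresenation} and delivers the extra information $\mathrm{ext}(\mathrm{SL}(\mathcal{I}))=\{f_{\mathcal{F}}: \mathcal{F} \in \mathrm{Ult}(\mathcal{I})\}$, which is strictly stronger than the closed-convex-hull statement alone.
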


 Since each $S_{\mathcal{I}}$-limit has norm one, it follows that the diameter of $\mathrm{SL}(\mathcal{I})$, that is, 
$
\mathrm{diam}(\mathrm{SL}(\mathcal{I})):=\sup\{\|f-g\|: f,g \in \mathrm{SL}(\mathcal{I})\}
$ 
is at most $2$. In our next result, we show that this upper bound is optimal in the nonmaximal case. 
\begin{thm}\label{thm:weakdiameter}
Let $\mathcal{I}$ be an ideal on $\omega$. Then $\mathrm{diam}(\mathrm{SL}(\mathcal{I}))=2$ if and only if $\mathcal{I}$ is not maximal. 
\end{thm}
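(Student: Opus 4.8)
The plan is to prove the two directions separately. For the easy direction, suppose $\mathcal{I}$ is maximal. Then $\mathrm{Ult}(\mathcal{I})$ is a single point $\{\mathcal{F}\}$ (the unique free ultrafilter whose dual ideal is $\mathcal{I}$), so by Theorem~\ref{thm:secondrepresenation} we have $\mathrm{SL}(\mathcal{I})=\overline{\mathrm{co}}(\{f_{\mathcal{F}}\})=\{f_{\mathcal{F}}\}$, which is a singleton; hence its diameter is $0\neq 2$. (Alternatively, invoke Theorem~\ref{thm:firstrepresenation} or argue directly that a normalized capacity on a one-point space is the trivial one.) This gives the contrapositive of the ``only if'' direction.

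For the ``if'' direction, assume $\mathcal{I}$ is not maximal and produce $f,g\in\mathrm{SL}(\mathcal{I})$ with $\|f-g\|=2$. Since $\mathcal{I}$ is not maximal, there is a set $A\subseteq\omega$ with $A\notin\mathcal{I}$ and $\omega\setminus A\notin\mathcal{I}$; equivalently, both $\mathcal{I}\cup\{\omega\setminus A\}$ and $\mathcal{I}\cup\{A\}$ generate proper ideals, so I may pick maximal ideals $\mathcal{J}_1\supseteq\mathcal{I}\cup\{\omega\setminus A\}$ and $\mathcal{J}_2\supseteq\mathcal{I}\cup\{A\}$. Set $f:=f_{\mathcal{J}_1}$ and $g:=f_{\mathcal{J}_2}$. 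Both are $S_{\mathcal{I}}$-limits (they extend limits, are positive, and annihilate $\bm{1}_B$ for $B\in\mathcal{I}$). Now consider the test sequence $x:=\bm{1}_A-\bm{1}_{\omega\setminus A}=2\bm{1}_A-e$, which has $\|x\|=1$. Since $A\notin\mathcal{J}_1$ we have $f(\bm{1}_A)=0$, so $f(x)=-1$; since $A\in\mathcal{J}_2$, i.e. $\omega\setminus A\notin\mathcal{J}_2$, we have $g(\bm{1}_A)=1$, so $g(x)=1$. Therefore $\|f-g\|\ge |f(x)-g(x)|=2$, and combined with the a priori bound $\|f-g\|\le\|f\|+\|g\|=2$ this yields $\mathrm{diam}(\mathrm{SL}(\mathcal{I}))=2$.

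I expect no serious obstacle here: the only point requiring a little care is the existence of a set $A$ witnessing non-maximality, which is immediate from the definition of a maximal ideal (an ideal $\mathcal{I}$ is maximal iff for every $A\subseteq\omega$ either $A\in\mathcal{I}$ or $\omega\setminus A\in\mathcal{I}$), together with the routine verification that the ideals $\mathcal{I}\cup\{\omega\setminus A\}$ and $\mathcal{I}\cup\{A\}$ generate proper ideals --- indeed, if $\mathcal{I}\cup\{\omega\setminus A\}$ generated all of $\mathcal{P}(\omega)$, then $\omega\setminus(\omega\setminus A)=A$ would be covered by a set in $\mathcal{I}$, forcing $A\in\mathcal{I}$, a contradiction; symmetrically for the other. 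Finally, one should note that this direction does not even need Theorems~\ref{thm:firstrepresenation} or~\ref{thm:secondrepresenation}; the construction is self-contained, paralleling the independence-of-axioms discussion in the introduction. The maximal case, by contrast, is where the representation theorems (collapsing $\mathrm{Ult}(\mathcal{I})$ to a point) make the argument transparent.
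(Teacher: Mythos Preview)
Your argument is correct and follows essentially the same strategy as the paper: in the maximal case both you and the authors invoke Theorem~\ref{thm:secondrepresenation} to collapse $\mathrm{SL}(\mathcal{I})$ to a singleton, and in the non-maximal case both pick $A$ with $A,\omega\setminus A\notin\mathcal{I}$ and test on $x=\bm{1}_A-\bm{1}_{\omega\setminus A}$. The only difference is that the paper obtains the two witnesses $f,g$ abstractly via $\Gamma_x(\mathcal{I})=\{-1,1\}$ together with Theorems~\ref{thm:secondrepresenation} and~\ref{thm:Iclusterpoints}, whereas you build them directly as $f_{\mathcal{J}_1},f_{\mathcal{J}_2}$ for explicit maximal extensions of $\mathcal{I}$; your route is slightly more elementary and, as you note, self-contained.

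One small slip to fix: you have the values of $f(\bm{1}_A)$ and $g(\bm{1}_A)$ reversed. Since $\omega\setminus A\in\mathcal{J}_1$, the $\mathcal{J}_1$-limit of $\bm{1}_A$ is $1$ (the ``bad'' set $\{n:\bm{1}_A(n)\neq 1\}=\omega\setminus A$ lies in $\mathcal{J}_1$), so $f(\bm{1}_A)=1$ and $f(x)=1$; symmetrically $A\in\mathcal{J}_2$ gives $g(\bm{1}_A)=0$ and $g(x)=-1$. This does not affect the conclusion, since $|f(x)-g(x)|=2$ either way.
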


If, in addition, the ideal $\mathcal{I}$ is meager (i.e., it can be regarded as a meager subset of $\{0,1\}^\omega$), then the above claim can be strenghtened.
\begin{thm}\label{thm:strongdiameter}
    Let $\mathcal{I}$ be a meager ideal on $\omega$ and fix $f \in \mathrm{SL}(\mathcal{I})$. Then there exists $g \in \mathrm{SL}(\mathcal{I})$ such that $\|f-g\|=2$. 
\end{thm}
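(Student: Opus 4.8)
The plan is to exhibit the witness $g$ as an ultrafilter limit $f_{\mathcal{G}}$ with $\mathcal{G}\in\mathrm{Ult}(\mathcal{I})$, and to detect the norm gap through a single test sequence. Everything reduces to the following assertion: \emph{there is a set $B\subseteq\omega$ with $B\notin\mathcal{I}$ and $f(\bm{1}_B)=0$}. Granting this, since $B\notin\mathcal{I}$ the family $\{B\}\cup\{\omega\setminus A:A\in\mathcal{I}\}$ generates a proper filter (a finite intersection of its members either lies in the dual filter of $\mathcal{I}$ — nonempty since $\omega\notin\mathcal{I}$ — or equals $B\setminus A$ for some $A\in\mathcal{I}$, and $B\setminus A=\emptyset$ would force $B\in\mathcal{I}$). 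Extending it to a free ultrafilter $\mathcal{G}$, we get $B\in\mathcal{G}$ and $\mathcal{I}\subseteq\mathcal{J}_{\mathcal{G}}$, i.e. $\mathcal{G}\in\mathrm{Ult}(\mathcal{I})$, so that $g:=f_{\mathcal{G}}\in\mathrm{SL}(\mathcal{I})$ (as noted in Section~\ref{sec:intro}). Testing on $x:=2\bm{1}_B-e$, which satisfies $\|x\|_\infty=1$, one gets $f(x)=2f(\bm{1}_B)-f(e)=-1$ and $g(x)=2g(\bm{1}_B)-g(e)=1$, hence $\|f-g\|\ge|f(x)-g(x)|=2$; the reverse inequality holds trivially because $\|f\|=\|g\|=1$.

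To construct $B$, observe first that $A\mapsto\mu(A):=f(\bm{1}_A)$ defines a finitely additive probability measure on $\mathcal{P}(\omega)$ which vanishes on $\mathcal{I}$: positivity, linearity, $f(e)=1$ and property \ref{item:3SI} give, respectively, nonnegativity, additivity on disjoint sets, normalization, and $\mu(A)=0$ for $A\in\mathcal{I}$; in particular $\mu$ vanishes on finite sets. Here meagerness enters: by the Talagrand--Jalali-Naini characterization of meager ideals, there is a partition $\omega=\bigsqcup_{n\in\omega}I_n$ into finite intervals such that every $A\in\mathcal{I}$ satisfies $I_n\subseteq A$ for only finitely many $n$. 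Pushing $\mu$ forward along $n\mapsto I_n$, the map $\nu(S):=\mu\big(\bigcup_{n\in S}I_n\big)$ is again a finitely additive probability measure on $\mathcal{P}(\omega)$ that vanishes on finite sets. It then suffices to find an \emph{infinite} $S\subseteq\omega$ with $\nu(S)=0$ and put $B:=\bigcup_{n\in S}I_n$: indeed $f(\bm{1}_B)=\mu(B)=\nu(S)=0$, while $B$ contains $I_n$ for every $n\in S$, hence for infinitely many $n$, so $B\notin\mathcal{I}$ by the property of the partition.

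The remaining point — every finitely additive probability measure on $\mathcal{P}(\omega)$ vanishing on finite sets admits an infinite null set — I would settle by a pseudo-intersection argument. Assume no infinite set is $\nu$-null. Set $S_0:=\omega$ and, given an infinite $S_k$, split it into two infinite halves and let $S_{k+1}$ be the one with smaller $\nu$-measure; by additivity $\nu(S_{k+1})\le\nu(S_k)/2$, so $\nu(S_k)\le 2^{-k}$. Pick an infinite $S_\infty\subseteq\omega$ with $S_\infty\setminus S_k$ finite for all $k$ (a diagonal pseudo-intersection of the decreasing chain $(S_k)$); then $\nu(S_\infty)\le\nu(S_\infty\cap S_k)+\nu(S_\infty\setminus S_k)\le\nu(S_k)\le 2^{-k}$ for every $k$, whence $\nu(S_\infty)=0$, contradicting the infinitude of $S_\infty$. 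I expect the main conceptual obstacle to be precisely this interface with meagerness: for a non-meager (e.g. maximal) ideal $\mathcal{I}$ the functional $f$ may be an ultrafilter limit all of whose null sets already lie in $\mathcal{I}$, so the argument genuinely needs the Talagrand partition to force $B$ to be $\mathcal{I}$-positive; once that is in place, the rest is routine bookkeeping with finitely additive measures and the ultrafilter lemma.
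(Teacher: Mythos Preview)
Your proof is correct and follows the same overall architecture as the paper's: locate a set $B\notin\mathcal{I}$ with $f(\bm{1}_B)=0$, then take for $g$ an ultrafilter limit along some $\mathcal{G}\in\mathrm{Ult}(\mathcal{I})$ with $B\in\mathcal{G}$, and test on $\bm{1}_B-\bm{1}_{\omega\setminus B}$. The difference lies entirely in how the key set $B$ is obtained. The paper outsources this step to \cite[Corollary~2.10]{MR4358610}, which asserts directly that for a meager ideal the inclusion $\mathcal{I}\subseteq\{A:f(\bm{1}_A)=0\}$ is strict. You instead give a self-contained argument: invoke the Talagrand--Jalali-Naini partition $(I_n)$, push the finitely additive measure $\mu=T(f)$ forward to a diffuse finitely additive probability $\nu$ on blocks, and produce an infinite $\nu$-null set by a halving-plus-pseudo-intersection construction; then $B=\bigcup_{n\in S}I_n$ is $\mathcal{I}$-positive because it swallows infinitely many $I_n$. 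This is essentially a direct proof of the cited Kadets--Seliutin--Tryba fact, and it makes the argument independent of that reference. A second, minor difference is that where the paper appeals to Theorem~\ref{thm:secondrepresenation} and Theorem~\ref{thm:Iclusterpoints} to exhibit $g$, you build the ultrafilter $\mathcal{G}$ by hand from the filter generated by $\{B\}$ and the dual of $\mathcal{I}$; the two routes are equivalent, yours being slightly more explicit. One cosmetic point: the contradiction framing in your last paragraph is unnecessary, since the pseudo-intersection $S_\infty$ you construct is already the desired infinite $\nu$-null set.
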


In the next result, we characterize the set of differences $\mathrm{SL}(\mathcal{I})-\mathrm{SL}(\mathcal{I})$:
\begin{thm}\label{thm:differenceSIlimits}
    Let $\mathcal{I}$ be an ideal on $\omega$ and fix $f \in \ell_\infty^{\,\prime}$. Then there exist $g,h \in \mathrm{SL}(\mathcal{I})$ such that $f=g-h$ if and only if the following conditions hold\textup{:}
    \begin{enumerate}[label={\rm (\alph{*})}]
    \item \label{item:aSLSL} $f(e)=0$\textup{;}
    \item \label{item:bSLSL} $\|f\|\le 2$\textup{;}
    \item \label{item:cSLSL} $f(\bm{1}_A)=0$ for all $A \in \mathcal{I}$\textup{.}
    \end{enumerate}
In addition, such decomposition is unique if and only if $\|f\|=2$ or $\mathcal{I}$ is maximal. 
\end{thm}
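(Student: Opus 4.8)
The plan is to work inside the Banach lattice $\ell_{\infty}^{\,\prime}$, which is an $\mathrm{AL}$-space since $\ell_\infty$ is an $\mathrm{AM}$-space with order unit $e$. Two standard facts will be used throughout: every $\phi \in \ell_{\infty}^{\,\prime}$ has a Jordan decomposition $\phi = \phi^{+}-\phi^{-}$ with $\phi^{\pm}\ge 0$ and $\|\phi\| = \|\phi^{+}\|+\|\phi^{-}\|$, and every positive $\phi$ satisfies $\|\phi\| = \phi(e)$. Necessity of \ref{item:aSLSL}--\ref{item:cSLSL} is then immediate: if $f = g-h$ with $g,h \in \mathrm{SL}(\mathcal{I})$ then $f(e) = g(e)-h(e) = 0$, $\|f\| \le \|g\|+\|h\| = 2$, and $f(\bm 1_A) = 0-0 = 0$ for $A \in \mathcal{I}$.

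For sufficiency, assume \ref{item:aSLSL}--\ref{item:cSLSL} and write $f = f^{+}-f^{-}$. First I would note that \ref{item:aSLSL} gives $f^{+}(e) = f^{-}(e)$, so with \ref{item:bSLSL} one gets $\|f^{+}\| = \|f^{-}\| = \tfrac12\|f\| =: c \in [0,1]$. Next I would upgrade \ref{item:cSLSL}: since the simple functions supported on a fixed $A \in \mathcal{I}$ are dense in the bounded sequences supported on $A$ and $f$ is continuous, $f$ vanishes on every bounded sequence supported on a member of $\mathcal{I}$; then the Riesz--Kantorovich formula $f^{+}(\bm 1_A) = \sup\{f(x) : 0 \le x \le \bm 1_A\}$ (and its analogue for $f^{-}$) yields $f^{\pm}(\bm 1_A) = 0$ for all $A \in \mathcal{I}$. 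If $c = 0$ then $f = 0$ and we may take $g = h$ to be any point of the nonempty set $\mathrm{SL}(\mathcal{I})$. If $c>0$, then $\bar f^{\pm} := f^{\pm}/c$ are positive, satisfy $\bar f^{\pm}(e) = 1$, and annihilate $\bm 1_A$ for every $A \in \mathcal{I}$; in particular they annihilate every $\bm 1_{\{n\}}$, hence all of $c_0$, hence they extend the limit functional, so $\bar f^{\pm} \in \mathrm{SL}(\mathcal{I})$. Fixing any $q \in \mathrm{SL}(\mathcal{I})$ and setting $g := f^{+}+(1-c)q$ and $h := f^{-}+(1-c)q$ (convex combinations of elements of $\mathrm{SL}(\mathcal{I})$, hence again in $\mathrm{SL}(\mathcal{I})$) gives $g-h = f$.

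For the uniqueness clause: if $\mathcal{I}$ is maximal then $\mathrm{SL}(\mathcal{I})$ is a singleton, so the only $f$ admitting a decomposition is $0$, uniquely. If $\|f\| = 2$, then $c = 1$, so $f^{\pm}(e) = 1$; given any $g,h \in \mathrm{SL}(\mathcal{I})$ with $g-h = f$, minimality of the Jordan decomposition in the Riesz space $\ell_{\infty}^{\,\prime}$ forces $g \ge f^{+}$ and $h \ge f^{-}$, so $r := g-f^{+} = h-f^{-}$ is positive with $r(e) = g(e)-f^{+}(e) = 0$, whence $r = 0$ and the decomposition $g = f^{+}$, $h = f^{-}$ is unique. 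Conversely, if $\mathcal{I}$ is not maximal and $\|f\| < 2$ (so $c<1$), then $\mathrm{SL}(\mathcal{I})$ contains two distinct points $q_1,q_2$ (for instance $f_{\mathcal{J}_1}\neq f_{\mathcal{J}_2}$ for distinct maximal ideals $\mathcal{J}_1,\mathcal{J}_2\supseteq\mathcal{I}$, cf.\ Section~\ref{sec:intro}, or Theorem~\ref{thm:weakdiameter}), and $g_i := f^{+}+(1-c)q_i$, $h_i := f^{-}+(1-c)q_i$ for $i = 1,2$ are two decompositions of $f$ with $g_1-g_2 = (1-c)(q_1-q_2)\neq 0$.

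The step I expect to be the main obstacle is the upgrade of \ref{item:cSLSL} in the sufficiency part — that the Jordan parts $f^{\pm}$ still annihilate $\bm 1_A$ for every $A \in \mathcal{I}$ — since this is exactly what lets their normalizations land in $\mathrm{SL}(\mathcal{I})$, and it needs the continuity of $f$ together with the order-theoretic description of $f^{\pm}$ on positive elements rather than any concrete model of $\ell_{\infty}^{\,\prime}$.
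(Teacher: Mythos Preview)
Your proof is correct and the construction is literally the same as the paper's: your $g = f^{+} + (1-c)q$ with $c=\|f\|/2$ is exactly formula \eqref{eq:definitiong}, and your verification that $f^{\pm}(\bm 1_A)=0$ for $A\in\mathcal I$ via the Riesz--Kantorovich formula matches the paper's (the paper reduces the supremum to $\sup\{f(\bm 1_B):B\subseteq A\}$ via Abramovich's theorem, while you pass through density of simple functions; both are fine). The one place you diverge is the uniqueness argument for $\|f\|=2$: the paper shows $g\wedge h=0$ by producing, via Lemma~\ref{lem:abramoich}, a sequence of sets $A_n$ with $g(\bm 1_{A_n})\to 1$ and $h(\bm 1_{A_n})\to 0$ and then estimating $(g\wedge h)(\bm 1_A)$ explicitly, whereas you observe directly that any decomposition $f=g-h$ with $g,h\ge 0$ forces $g\ge f^{+}$, so that $r:=g-f^{+}\ge 0$ satisfies $r(e)=0$ and hence $r=0$. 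Your route here is shorter and more transparent, and avoids the $\varepsilon$-argument entirely; the paper's version, on the other hand, makes the disjointness $g\wedge h=0$ explicit, which is a slightly stronger intermediate statement.
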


Proofs are given in Section \ref{sec:proofs}. Further results are given in Section \ref{sec:furtherresults}. 



\section{Preliminaries}\label{sec:preliminaries}


Given an ideal $\mathcal{I}$ on $\omega$ and a bounded real sequence $x \in \ell_\infty$, a point $\eta \in \mathbf{R}$ is said to be a $\mathcal{I}$\emph{-cluster point} of $x$  if $\{n \in \omega: |x_n-\eta|\le \varepsilon\} \notin \mathcal{I}$ for all $\varepsilon>0$ (note that they have been termed \emph{statistical cluster point} if $\mathcal{I}=\mathcal{Z}$, see e.g. \cite{MR1181163, MR1416085}). 
For each $x \in \ell_\infty$, we denote by $\Gamma_x(\mathcal{I})$ the set of its of $\mathcal{I}$-cluster points; in addition, $\mathcal{I}\text{-}\liminf x:=\min \Gamma_x(\mathcal{I})$ and $\mathcal{I}\text{-}\limsup x:=\max \Gamma_x(\mathcal{I})$, which are well defined since $\Gamma_x(\mathcal{I})$ is nonempty compact; 
see e.g. \cite{MR3920799} and references therein for basic properties and characterization of $\mathcal{I}$-cluster points. Interestingly, the map $x\mapsto \mathcal{I}\text{-}\limsup x$ provides a complete pseudonorm on $\ell_\infty$, see \cite{MR2735533, MR4610955}. 

Recall also that each sequence $x\in \ell_\infty$ can be extended uniquely to a continuous function $\hat{x}: \beta\omega \to \mathbf{R}$ defined by 
$$
\forall \mathcal{F} \in \beta\omega, \quad 
\hat{x}(\mathcal{F}):=\mathcal{F}\text{-}\lim x.
$$
In particular, if $\mathcal{F}$ is the principal ultrafilter associated with an integer $n \in \omega$, then $\hat{x}(\mathcal{F})=x_n$. Hereafter, for each $A\subseteq \omega$, we write 
$$
\tilde{A}:=\{\mathcal{F} \in \beta\omega: A \in \mathcal{F} \text{ and }\mathcal{F}\text{ free ultrafilter }\}.
$$

\begin{lem}\label{lem:bndensityzero}
Let $\mathcal{I}$ be an ideal on $\omega$. Then, for each $A\subseteq \omega$, we have 
$A \in \mathcal{I}$ if and only if $\tilde{A} \cap \mathrm{Ult}(\mathcal{I})=\emptyset$. 
\end{lem}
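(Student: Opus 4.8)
The plan is to prove the contrapositive in both directions, exploiting the standard correspondence between subsets $A \subseteq \omega$, their clopen images $\tilde{A} \subseteq \beta\omega \setminus \omega$, and membership in ideals/filters. First I would recall that for a free ultrafilter $\mathcal{F}$, one has $A \in \mathcal{F}$ if and only if $\mathcal{F} \in \tilde{A}$, and that $\mathcal{I} \subseteq \mathcal{J}_{\mathcal{F}}$ is equivalent to saying $\mathcal{F}$ contains the dual filter $\mathcal{I}^\star = \{\omega \setminus B : B \in \mathcal{I}\}$, i.e. $\mathcal{F} \in \mathrm{Ult}(\mathcal{I})$.

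For the forward direction, suppose $A \in \mathcal{I}$. Then for \emph{every} $\mathcal{F} \in \mathrm{Ult}(\mathcal{I})$ we have $A \in \mathcal{I} \subseteq \mathcal{J}_{\mathcal{F}}$, which means $\omega \setminus A \in \mathcal{F}$, hence $A \notin \mathcal{F}$, so $\mathcal{F} \notin \tilde{A}$. Since this holds for all $\mathcal{F} \in \mathrm{Ult}(\mathcal{I})$, we conclude $\tilde{A} \cap \mathrm{Ult}(\mathcal{I}) = \emptyset$.

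For the converse, I would argue by contraposition: suppose $A \notin \mathcal{I}$. I need to produce an ultrafilter $\mathcal{F}$ witnessing $\tilde{A} \cap \mathrm{Ult}(\mathcal{I}) \neq \emptyset$, i.e. a free ultrafilter with $A \in \mathcal{F}$ and $\mathcal{I} \subseteq \mathcal{J}_{\mathcal{F}}$. The natural candidate is any ultrafilter extending the filter base $\{A \setminus B : B \in \mathcal{I}\}$; this collection has the finite intersection property precisely because $A \notin \mathcal{I}$ and $\mathcal{I}$ is closed under finite unions (if $A \setminus B_1, \dots, A \setminus B_n$ were to have empty intersection, then $A \subseteq B_1 \cup \dots \cup B_n \in \mathcal{I}$, contradiction). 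Let $\mathcal{F}$ be an ultrafilter containing this base (it exists since $A\setminus B$ is infinite for every $B\in\mathcal{I}$, as $A$ is not in $\mathcal{I}\supseteq\mathrm{Fin}$, so $\mathcal{F}$ is automatically free). Then $A \in \mathcal{F}$, and for every $B \in \mathcal{I}$ we have $A \setminus B \in \mathcal{F}$, hence $\omega\setminus B \supseteq A\setminus B$ lies in $\mathcal{F}$, so $B \in \mathcal{J}_{\mathcal{F}}$; thus $\mathcal{I} \subseteq \mathcal{J}_{\mathcal{F}}$ and $\mathcal{F} \in \tilde{A} \cap \mathrm{Ult}(\mathcal{I})$.

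The only mild subtlety — and the step I would be most careful about — is confirming the finite intersection property of the base $\{A\setminus B : B \in \mathcal{I}\}$ and that every element is infinite (so that the resulting ultrafilter is genuinely free, not principal); both follow immediately from $A \notin \mathcal{I}$ together with $\mathrm{Fin} \subseteq \mathcal{I}$ and stability under finite unions, so there is no real obstacle here.
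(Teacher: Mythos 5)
Your proof is correct and follows essentially the same route as the paper: the forward direction is the same membership chase, and for the converse the paper extends $\{A\}\cup\mathcal{G}$ (where $\mathcal{G}$ is the dual filter of $\mathcal{I}$) to a free ultrafilter via the finite intersection property, which generates exactly the same filter as your base $\{A\setminus B: B\in\mathcal{I}\}$. One tiny point worth tightening: freeness of $\mathcal{F}$ follows not merely because each base element is infinite, but because the base contains $A\setminus F$ for every finite $F$ (as $\mathrm{Fin}\subseteq\mathcal{I}$), so no ultrafilter extending it can contain a finite set.
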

\begin{proof}
Fix $A\subseteq \omega$ and define the dual filter $\mathcal{G}:=\{B \subseteq \omega: \omega\setminus B \in \mathcal{I}\}$. First, suppose that $A \in \mathcal{I}$ and let $\mathcal{F}$ be a free ultrafilter containing $A$, i.e., $\mathcal{F} \in \tilde{A}$. Then $\omega\setminus A \notin \mathcal{F}$ while, on the other hand, it belongs to $\mathcal{G}$. Hence $\tilde{A} \cap \mathrm{Ult}(\mathcal{I})=\emptyset$. 
%

Conversely, suppose that $A \notin \mathcal{I}$. Considering that every finite subfamily of $\{A\} \cup \mathcal{G}$ has intersection not in $\mathcal{I}$ (hence, infinite intersection), there exists a free ultrafilter $\mathcal{F}$ containing $A$ and $\mathcal{G}$, hence $\tilde{A} \cap \mathrm{Ult}(\mathcal{I})\neq \emptyset$.
\end{proof}

\begin{lem}\label{lem:bnsubseqconvergence}
Let $\mathcal{I}$ be an ideal on $\omega$ and fix an infinite set $A\subseteq \omega$ with increasing enumeration $(a_n: n \in \omega)$. Then, for each $x \in \ell_\infty$ and $\eta \in \mathbf{R}$, we have $\lim_n x_{a_n}=\eta$ if and only if $\hat{x}[\,\tilde{A}\,]=\{\eta\}$.
\end{lem}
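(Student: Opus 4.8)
The plan is to exploit the fact that $\tilde{A}$ is exactly the closure of $A$ in $\beta\omega$ minus its isolated points, together with the continuity of $\hat{x}$. First I would observe that the set $A$, as a subset of $\beta\omega$ via principal ultrafilters, has closure $\mathrm{cl}_{\beta\omega}(A)$ equal to $\{A \in \mathcal{F}\} = \{\text{principal ultrafilters }n \text{ with }n \in A\} \cup \tilde{A}$; moreover $\tilde{A}$ is precisely the set of non-isolated points of this closure, and it coincides with the closure of $\{a_n : n \in \omega\}$ in $\beta\omega$ since $A$ is infinite. So proving the equivalence amounts to comparing $\lim_n x_{a_n} = \eta$ with the condition that the continuous function $\hat{x}$ is constantly $\eta$ on the remainder part of $\mathrm{cl}_{\beta\omega}(A)$.

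For the forward direction, suppose $\lim_n x_{a_n}=\eta$. Fix $\mathcal{F} \in \tilde{A}$; then $A \in \mathcal{F}$, and $\mathcal{F}\text{-}\lim x = \hat{x}(\mathcal{F})$. I would argue that for every $\varepsilon>0$ the set $B_\varepsilon := \{n \in \omega : |x_n-\eta|<\varepsilon\}$ satisfies $A\setminus B_\varepsilon$ finite (by the convergence of the subsequence along $A$), hence $A \cap B_\varepsilon \in \mathcal{F}$ since $\mathcal{F}$ is free and contains $A$; therefore $\{n : |x_n - \eta| < \varepsilon\} \in \mathcal{F}$ for all $\varepsilon > 0$, which is exactly $\mathcal{F}\text{-}\lim x = \eta$. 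As $\mathcal{F} \in \tilde{A}$ was arbitrary, $\hat{x}[\tilde{A}] = \{\eta\}$ (nonemptiness of $\tilde A$ being clear since $A$ is infinite).

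For the converse, suppose $\hat{x}[\tilde{A}] = \{\eta\}$ but $\lim_n x_{a_n} \neq \eta$. Then there is $\varepsilon>0$ and an infinite subset $A' = \{a_n : n \in M\} \subseteq A$ with $|x_{a_n}-\eta|\ge \varepsilon$ for all $n \in M$. Pick any free ultrafilter $\mathcal{F}$ containing $A'$ (possible since $A'$ is infinite); then $\mathcal{F} \in \tilde{A'} \subseteq \tilde{A}$, and the set $\{n : |x_n - \eta| \ge \varepsilon\}$ contains $A'$, hence lies in $\mathcal{F}$, so $|\hat{x}(\mathcal{F}) - \eta| \ge \varepsilon$ (using that $\mathcal{F}\text{-}\lim$ of a sequence bounded away from $\eta$ on a set in $\mathcal{F}$ cannot equal $\eta$). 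This contradicts $\hat{x}(\mathcal{F}) = \eta$, completing the proof.

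The only mildly delicate point — and where I would be careful — is the passage between $\mathcal{F}\text{-}\lim$ and the membership statements $\{n : |x_n - \eta| < \varepsilon\} \in \mathcal{F}$: one must use that for a free ultrafilter $\mathcal{F}$, $\mathcal{F}\text{-}\lim x = \eta$ holds if and only if $\{n : |x_n-\eta|<\varepsilon\} \in \mathcal{F}$ for every $\varepsilon>0$, equivalently if and only if $\hat{x}(\mathcal{F}) = \eta$; this is a standard fact about the Stone--\v{C}ech extension recalled just before the lemma. Everything else is routine. Note in particular that the role of the ideal $\mathcal{I}$ in the statement is purely notational: the claim and its proof do not actually depend on $\mathcal{I}$, only on $\beta\omega$ and the set $A$.
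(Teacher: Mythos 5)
Your proof is correct and follows essentially the same route as the paper: the forward direction is identical, and your converse (choosing a free ultrafilter on an infinite subset of $A$ where $x$ stays $\varepsilon$-away from $\eta$) is just the contrapositive form of the paper's observation that $\bigcap \tilde{A}$ is the filter $\{B\subseteq\omega: A\setminus B \in \mathrm{Fin}\}$. One cosmetic quibble: in your preamble, $\tilde{A}$ is the closure of $A$ in $\beta\omega$ \emph{intersected with the remainder} $\beta\omega\setminus\omega$, not the closure of $\{a_n : n\in\omega\}$ itself (which still contains the principal points); this does not affect the argument.
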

\begin{proof}
Suppose that $\lim_n x_{a_n}=\eta$ and fix a free ultrafilter $\mathcal{F}$ containing $A$, i.e., $\mathcal{F} \in \tilde{A}$. Then for each neighborhood $U$ of $\eta$ there exists $F \in \mathrm{Fin}$ such that $\{n\in \omega: x_n \in U\}\supseteq A\setminus F$, hence it belongs to $\mathcal{F}$. Therefore $\hat{x}(\mathcal{F})=\mathcal{F}\text{-}\lim x=\eta$. 

Conversely, suppose that $\hat{x}[\,\tilde{A}\,]=\{\eta\}$ and let $U$ be a neighborhood of $\eta$. Then $\{n\in \omega: x_{n} \in U\} \in \mathcal{F}$ for each $F \in \tilde{A}$. Hence $\{n\in \omega: x_{n} \in U\}$ belongs to $\bigcap \tilde{A}$, which is known to be the filter $\{B \subseteq \omega: A\setminus B\in \mathrm{Fin}\}$. We conclude that the subsequence $(x_{a_n}:n \in \omega)$ converges to $\eta$. 
\end{proof}

As a consequence, we provide a characterization of $\mathcal{I}$-cluster points, which generalizes \cite[Theorem 2(3)]{MR1372186}. 
\begin{thm}\label{thm:Iclusterpoints}
Let $\mathcal{I}$ be an ideal on $\omega$. Then $\Gamma_x(\mathcal{I})=\hat{x}[ \mathrm{Ult}(\mathcal{I})]$ for each $x \in \ell_\infty$. 
\end{thm}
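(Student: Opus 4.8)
The plan is to prove the two inclusions $\Gamma_x(\mathcal{I}) \subseteq \hat{x}[\mathrm{Ult}(\mathcal{I})]$ and $\hat{x}[\mathrm{Ult}(\mathcal{I})] \subseteq \Gamma_x(\mathcal{I})$ separately, using the two preliminary lemmas and the basic dictionary between subsets of $\omega$, their traces $\tilde{A}$ in $\beta\omega\setminus\omega$, and membership of free ultrafilters. Throughout I would keep in mind that $\eta \in \Gamma_x(\mathcal{I})$ means exactly that $\{n : |x_n-\eta|\le\varepsilon\}\notin\mathcal{I}$ for every $\varepsilon>0$, and that $\hat{x}(\mathcal{F})=\mathcal{F}\text{-}\lim x$.

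For the inclusion $\hat{x}[\mathrm{Ult}(\mathcal{I})]\subseteq\Gamma_x(\mathcal{I})$: fix $\mathcal{F}\in\mathrm{Ult}(\mathcal{I})$ and set $\eta:=\hat{x}(\mathcal{F})=\mathcal{F}\text{-}\lim x$. For any $\varepsilon>0$, by definition of the ultrafilter limit the set $A_\varepsilon:=\{n : |x_n-\eta|\le\varepsilon\}$ (or the open version $\{n:|x_n-\eta|<\varepsilon\}$, which suffices) belongs to $\mathcal{F}$. Since $\mathcal{F}$ is an ultrafilter and $\mathcal{I}\subseteq\mathcal{J}_{\mathcal{F}}$, a set in $\mathcal{F}$ cannot lie in $\mathcal{J}_{\mathcal{F}}$, hence cannot lie in $\mathcal{I}$; thus $A_\varepsilon\notin\mathcal{I}$. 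As $\varepsilon>0$ was arbitrary, $\eta\in\Gamma_x(\mathcal{I})$. (Alternatively one may phrase this via Lemma \ref{lem:bndensityzero}: $A_\varepsilon\in\mathcal{F}$ gives $\mathcal{F}\in\tilde{A_\varepsilon}\cap\mathrm{Ult}(\mathcal{I})\neq\emptyset$, so $A_\varepsilon\notin\mathcal{I}$.)

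For the reverse inclusion $\Gamma_x(\mathcal{I})\subseteq\hat{x}[\mathrm{Ult}(\mathcal{I})]$: fix $\eta\in\Gamma_x(\mathcal{I})$. The goal is to produce a free ultrafilter $\mathcal{F}$ with $\mathcal{I}\subseteq\mathcal{J}_{\mathcal{F}}$ and $\mathcal{F}\text{-}\lim x=\eta$. Consider the dual filter $\mathcal{G}:=\{B\subseteq\omega:\omega\setminus B\in\mathcal{I}\}$ together with the family $\{A_\varepsilon : \varepsilon>0\}$ where $A_\varepsilon:=\{n:|x_n-\eta|\le\varepsilon\}$. I claim this family has the finite intersection property \emph{modulo $\mathcal{I}$}, i.e., every finite intersection fails to be in $\mathcal{I}$: a finite intersection of members of $\mathcal{G}$ together with finitely many $A_{\varepsilon_1},\dots,A_{\varepsilon_k}$ equals $G\cap A_{\varepsilon_0}$ where $G\in\mathcal{G}$ and $\varepsilon_0=\min_i\varepsilon_i$ (using $A_{\varepsilon_0}\subseteq A_{\varepsilon_i}$); if this were in $\mathcal{I}$ then, since $\omega\setminus G\in\mathcal{I}$ and $\mathcal{I}$ is closed under finite unions and subsets, we would get $A_{\varepsilon_0}\subseteq (G\cap A_{\varepsilon_0})\cup(\omega\setminus G)\in\mathcal{I}$, contradicting $\eta\in\Gamma_x(\mathcal{I})$. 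Hence $\{A_\varepsilon:\varepsilon>0\}\cup\mathcal{G}$ generates a proper filter all of whose members are $\mathcal{I}$-positive (in particular infinite, since $\mathrm{Fin}\subseteq\mathcal{I}$), and it extends to a free ultrafilter $\mathcal{F}$. Then $\mathcal{G}\subseteq\mathcal{F}$ gives $\mathcal{I}\subseteq\mathcal{J}_{\mathcal{F}}$, i.e. $\mathcal{F}\in\mathrm{Ult}(\mathcal{I})$, while $A_\varepsilon\in\mathcal{F}$ for all $\varepsilon>0$ forces $\mathcal{F}\text{-}\lim x=\eta$, so $\eta=\hat{x}(\mathcal{F})\in\hat{x}[\mathrm{Ult}(\mathcal{I})]$.

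The only subtle point — really the crux of the argument — is the finite-intersection-modulo-$\mathcal{I}$ verification in the second inclusion: one must combine the $A_\varepsilon$'s correctly (nesting) and then argue that $\mathcal{I}$-smallness of $G\cap A_{\varepsilon_0}$ would, together with $\omega\setminus G\in\mathcal{I}$, contradict $A_{\varepsilon_0}\notin\mathcal{I}$. Everything else is a routine translation between the order-theoretic language of ideals and the topological language of $\beta\omega$, and in fact this second inclusion is essentially the construction already carried out in the proof of Lemma \ref{lem:bndensityzero}. I would also remark that combining the theorem with the nonemptiness and compactness of $\Gamma_x(\mathcal{I})$ recovers the fact that $\mathrm{Ult}(\mathcal{I})\neq\emptyset$ and that $\hat{x}$ restricted to $\mathrm{Ult}(\mathcal{I})$ has compact (hence closed, bounded) image, matching $\mathcal{I}\text{-}\liminf x=\min$ and $\mathcal{I}\text{-}\limsup x=\max$ of that image.
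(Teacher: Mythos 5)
Your proposal is correct, and for the easy inclusion $\hat{x}[\mathrm{Ult}(\mathcal{I})]\subseteq\Gamma_x(\mathcal{I})$ it coincides with the paper's argument. For the converse inclusion, however, you take a genuinely different (and in fact cleaner) route. The paper fixes a decreasing local base $(U_k)$ at $\eta$, applies Lemma \ref{lem:bndensityzero} to each $A_k:=\{n: x_n\in U_k\}$ separately to get ultrafilters $\mathcal{F}_k\in\mathrm{Ult}(\mathcal{I})$ with $A_k\in\mathcal{F}_k$, and then passes to a limit of the $\mathcal{F}_k$ in the compact space $\mathrm{Ult}(\mathcal{I})$, using continuity of $\hat{x}$. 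You instead build a \emph{single} ultrafilter in one step: you verify that the whole family $\{A_\varepsilon:\varepsilon>0\}\cup\mathcal{G}$ (with $\mathcal{G}$ the dual filter of $\mathcal{I}$) has the finite intersection property modulo $\mathcal{I}$ — the nesting of the $A_\varepsilon$ plus the observation that $A_{\varepsilon_0}\subseteq(G\cap A_{\varepsilon_0})\cup(\omega\setminus G)$ does exactly what is needed — and extend it to an ultrafilter $\mathcal{F}$, which then lies in $\mathrm{Ult}(\mathcal{I})$ (and is free, since $\mathcal{G}$ contains the Fr\'{e}chet filter because $\mathrm{Fin}\subseteq\mathcal{I}$) and satisfies $\mathcal{F}\text{-}\lim x=\eta$. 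What your approach buys is that it avoids any appeal to compactness or to extracting convergent subsequences of ultrafilters — a point where the paper's wording is actually slightly delicate, since $\beta\omega$ is compact but not sequentially compact (one should speak of a cluster point of the sequence $(\mathcal{F}_k)$ rather than a convergent subsequence), and since $\hat{x}(\mathcal{F}_k)$ is only guaranteed to lie in $\overline{U_k}$, not to equal $\eta$. Your direct construction is immune to both issues; it is essentially the proof of Lemma \ref{lem:bndensityzero} run on the whole family at once rather than on a single set.
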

\begin{proof}
Fix $x \in \ell_\infty$. First, suppose that there exists a free ultrafilter $\mathcal{F} \in \mathrm{Ult}(\mathcal{I})$ such that $\hat{x}(\mathcal{F})=\eta$, that is, $\mathcal{F}\text{-}\lim x=\eta$. Hence, for each neighborhood $U$ of $\eta$, it holds $\{n\in\omega: x_n \in U\} \in \mathcal{F}$. Considering that $\mathcal{F} \cap \mathcal{I}=\emptyset$, we obtain that $\eta$ is an $\mathcal{I}$-cluster point of $x$. Hence $\hat{x}[ \mathrm{Ult}(\mathcal{I})]\subseteq \Gamma_x(\mathcal{I})$. 

Conversely, suppose that $\eta \in \Gamma_x(\mathcal{I})$ and let $(U_k)$ be a decreasing local base of neighborhoods at $\eta$. Then $A_k:=\{n\in \omega: x_n \in U_k\} \notin \mathcal{I}$ for each $k\in \omega$. Thanks to Lemma \ref{lem:bndensityzero}, for each $k\in \omega$ there exists a free ultrafilter $\mathcal{F}_k$ containing $A_k$ and the dual filter of $\mathcal{I}$. In particular, $\hat{x}(\mathcal{F}_k)=\eta$ for each $k\in \omega$. Since $(\mathcal{F}_k: k \in \omega)$ is a sequence in the compact space $\mathrm{Ult}(\mathcal{I})$, there is a subsequence of ultrafilters converging to some $\mathcal{F} \in \mathrm{Ult}(\mathcal{I})$. By the continuity of $\hat{x}$, we conclude that $\hat{x}(\mathcal{F})=\eta$. Therefore $\Gamma_x(\mathcal{I})\subseteq \hat{x}[ \mathrm{Ult}(\mathcal{I})]$. 
\end{proof}

As a consequence, we recover an intermediate result of Freedman \cite[p. 226]{MR614658}: 
\begin{cor}
Let $\mathcal{I}$ be an ideal on $\omega$ and fix $A\subseteq \omega$ with $A\notin \mathcal{I}$. Then there exists $f \in \mathrm{SL}(\mathcal{I})$ such that $f(\bm{1}_A)=1$. 
\end{cor}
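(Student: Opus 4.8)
The plan is to deduce the corollary directly from Theorem~\ref{thm:Iclusterpoints} together with Theorem~\ref{thm:secondrepresenation} (or, alternatively, by exhibiting the functional by hand using Lemma~\ref{lem:bndensityzero}). First I would observe that, since $A\notin\mathcal{I}$, Lemma~\ref{lem:bndensityzero} gives a free ultrafilter $\mathcal{F}\in\mathrm{Ult}(\mathcal{I})$ with $A\in\mathcal{F}$. For this ultrafilter the associated functional $f_{\mathcal{F}}$ satisfies $f_{\mathcal{F}}(\bm{1}_A)=\mathcal{F}\text{-}\lim\bm{1}_A=1$, because $A\in\mathcal{F}$ forces the limit of the $0$--$1$ sequence $\bm{1}_A$ along $\mathcal{F}$ to equal $1$.

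It then remains only to note that $f_{\mathcal{F}}\in\mathrm{SL}(\mathcal{I})$. This is exactly the content of the remarks following Definition~\ref{defi:SLI}: for any maximal ideal $\mathcal{J}$ containing $\mathcal{I}$, the functional $f_{\mathcal{J}}$ defined in \eqref{eq:fJ} is an $\mathrm{S}_{\mathcal{I}}$-limit, and here $\mathcal{J}=\mathcal{J}_{\mathcal{F}}\supseteq\mathcal{I}$ precisely because $\mathcal{F}\in\mathrm{Ult}(\mathcal{I})$. Alternatively, one can cite Theorem~\ref{thm:secondrepresenation}, which says $\mathrm{SL}(\mathcal{I})=\overline{\mathrm{co}}(\{f_{\mathcal{F}}:\mathcal{F}\in\mathrm{Ult}(\mathcal{I})\})$, so each extreme-type generator $f_{\mathcal{F}}$ with $\mathcal{F}\in\mathrm{Ult}(\mathcal{I})$ lies in $\mathrm{SL}(\mathcal{I})$. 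Taking $f:=f_{\mathcal{F}}$ then finishes the proof.

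There is essentially no obstacle here; the corollary is a one-line consequence once the machinery of Section~\ref{sec:preliminaries} is in place. The only point requiring a modicum of care is making sure the ultrafilter supplied by Lemma~\ref{lem:bndensityzero} is genuinely free and genuinely contains the dual filter of $\mathcal{I}$ (so that it sits in $\mathrm{Ult}(\mathcal{I})$), but this is already guaranteed by the statement of that lemma via the nonemptiness of $\tilde A\cap\mathrm{Ult}(\mathcal{I})$. One could also phrase the argument more symmetrically through Theorem~\ref{thm:Iclusterpoints}: $A\notin\mathcal{I}$ means $1$ is an $\mathcal{I}$-cluster point of $\bm{1}_A$, hence $1\in\Gamma_{\bm{1}_A}(\mathcal{I})=\hat{\bm{1}_A}[\mathrm{Ult}(\mathcal{I})]$, which again produces the desired $\mathcal{F}$.
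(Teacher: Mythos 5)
Your proposal is correct and follows essentially the same route as the paper: the paper deduces the existence of $\mathcal{F}\in\mathrm{Ult}(\mathcal{I})$ with $\mathcal{F}\text{-}\lim\bm{1}_A=1$ from Theorem~\ref{thm:Iclusterpoints} (your closing remark), while your primary phrasing just unwinds that theorem back to Lemma~\ref{lem:bndensityzero}, which is the same underlying idea. Both then conclude identically by noting $f_{\mathcal{F}}\in\mathrm{SL}(\mathcal{I})$ as observed in the introduction.
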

\begin{proof}
    Since $A\notin \mathcal{I}$ then $1$ is an $\mathcal{I}$-cluster of the sequence $\bm{1}_A$. We obtain by Theorem \ref{thm:Iclusterpoints} that there exists $\mathcal{F} \in \mathrm{Ult}(\mathcal{I})$ such that $f_{\mathcal{F}}(\bm{1}_A)=\mathcal{F}\text{-}\lim \bm{1}_A=1$. The claim follows by the fact that $f_{\mathcal{F}} \in \mathrm{SL}(\mathcal{I})$, as anticipated in Section \ref{sec:intro}.  
\end{proof}

As another consequence, we generalize a claim contained in \cite[Theorem 3.2(i)]{MR3513160} and \cite[Theorem 2.4(i)]{CihanMustafa25}:
\begin{cor}\label{cor:liminflimsup}
Let $\mathcal{I}$ be an ideal on $\omega$. Then 
$$
\forall x \in \ell_\infty, \quad 
\{f(x): f \in \mathrm{SL}(\mathcal{I})\}=\mathrm{co}(\Gamma_x(\mathcal{I})).
$$
In particular, $\mathcal{I}\text{-}\liminf x\le f(x)\le \mathcal{I}\text{-}\limsup x$ for all $f \in \mathrm{SL}(\mathcal{I})$ and $x \in \ell_\infty$. 
\end{cor}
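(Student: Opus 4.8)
The plan is to prove the two inclusions of the displayed identity separately, using only Lemma \ref{lem:bndensityzero} and Theorem \ref{thm:Iclusterpoints} together with elementary facts about $\mathrm{SL}(\mathcal{I})$; the final assertion will drop out along the way. Throughout set $m:=\mathcal{I}\text{-}\liminf x=\min\Gamma_x(\mathcal{I})$ and $M:=\mathcal{I}\text{-}\limsup x=\max\Gamma_x(\mathcal{I})$. Since $\Gamma_x(\mathcal{I})$ is a nonempty compact subset of $\mathbf{R}$, its convex hull is exactly the closed interval $[m,M]$, so it suffices to show $\{f(x): f\in\mathrm{SL}(\mathcal{I})\}=[m,M]$.

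For the inclusion $[m,M]\subseteq\{f(x): f\in\mathrm{SL}(\mathcal{I})\}$, I would first observe that $\mathrm{SL}(\mathcal{I})$ is convex: a convex combination of functionals satisfying \ref{item:1SI}--\ref{item:3SI} again satisfies \ref{item:1SI}--\ref{item:3SI}. Recall from Section \ref{sec:intro} that $f_{\mathcal{F}}\in\mathrm{SL}(\mathcal{I})$ for every $\mathcal{F}\in\mathrm{Ult}(\mathcal{I})$. By Theorem \ref{thm:Iclusterpoints}, $m,M\in\hat{x}[\mathrm{Ult}(\mathcal{I})]$, so pick $\mathcal{F}_1,\mathcal{F}_2\in\mathrm{Ult}(\mathcal{I})$ with $f_{\mathcal{F}_1}(x)=\hat{x}(\mathcal{F}_1)=m$ and $f_{\mathcal{F}_2}(x)=\hat{x}(\mathcal{F}_2)=M$. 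Then for each $\lambda\in[0,1]$ the functional $\lambda f_{\mathcal{F}_1}+(1-\lambda)f_{\mathcal{F}_2}$ lies in $\mathrm{SL}(\mathcal{I})$ and sends $x$ to $\lambda m+(1-\lambda)M$, and these values sweep out all of $[m,M]$.

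For the reverse inclusion, fix $f\in\mathrm{SL}(\mathcal{I})$ and $\varepsilon>0$, and put $A:=\{n\in\omega: x_n\ge M+\varepsilon\}$. I claim $A\in\mathcal{I}$: otherwise Lemma \ref{lem:bndensityzero} gives a free ultrafilter $\mathcal{F}\in\mathrm{Ult}(\mathcal{I})$ with $A\in\mathcal{F}$, so $\hat{x}(\mathcal{F})\ge M+\varepsilon$, contradicting $\hat{x}(\mathcal{F})\in\Gamma_x(\mathcal{I})$ and $M=\max\Gamma_x(\mathcal{I})$ by Theorem \ref{thm:Iclusterpoints}. Since $A\in\mathcal{I}$, one has the pointwise bound $x\le (M+\varepsilon)e+2\|x\|\,\bm{1}_A$ (the crude constant $2\|x\|$ covers the entries on $A$ regardless of the sign or size of $M$). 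Applying $f$ and using positivity, linearity, $f(e)=1$, and $f(\bm{1}_A)=0$ yields $f(x)\le M+\varepsilon$; letting $\varepsilon\downarrow 0$ gives $f(x)\le M$. The symmetric argument with $\{n: x_n\le m-\varepsilon\}$ gives $f(x)\ge m$, so $f(x)\in[m,M]=\mathrm{co}(\Gamma_x(\mathcal{I}))$. This proves the identity, and the ``in particular'' statement is precisely what was just established. I expect no genuine obstacle here: the only point needing a little care is verifying that the tail set $A$ belongs to $\mathcal{I}$ and getting the harmless edge cases of the pointwise estimate right, and as an alternative one could derive $\{f(x):f\in\mathrm{SL}(\mathcal{I})\}\subseteq[m,M]$ in one line from Theorem \ref{thm:secondrepresenation} via weak$^\star$-continuity of evaluation at $x$, but the direct argument keeps the corollary self-contained.
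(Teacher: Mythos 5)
Your proof is correct, but it takes a genuinely more elementary route than the paper, whose entire proof is the one line ``it follows by Theorem \ref{thm:secondrepresenation} and Theorem \ref{thm:Iclusterpoints}'': there one uses that $\mathrm{SL}(\mathcal{I})$ is the weak$^\star$ closed convex hull of $\{f_{\mathcal{F}}:\mathcal{F}\in\mathrm{Ult}(\mathcal{I})\}$, so that evaluating at $x$ (a weak$^\star$ continuous map) sends $\mathrm{SL}(\mathcal{I})$ onto the convex hull of $\hat{x}[\mathrm{Ult}(\mathcal{I})]=\Gamma_x(\mathcal{I})$. You instead bypass Theorem \ref{thm:secondrepresenation} entirely --- and hence the Krein--Milman machinery and the identification of extreme points behind it --- replacing the upper inclusion by the direct estimate $x\le (M+\varepsilon)e+2\|x\|\,\bm{1}_A$ with $A=\{n:x_n\ge M+\varepsilon\}\in\mathcal{I}$ (your verification that $A\in\mathcal{I}$ via Lemma \ref{lem:bndensityzero} and Theorem \ref{thm:Iclusterpoints} is sound, and the crude constant $2\|x\|$ does cover the entries on $A$ since $M\ge -\|x\|$), and the lower inclusion by convexity of $\mathrm{SL}(\mathcal{I})$ together with the two ultrafilter limits realizing $m$ and $M$. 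What the paper's approach buys is brevity and uniformity with the rest of Section \ref{sec:proofs}; what yours buys is self-containedness and the observation that the corollary needs only positivity, $f(e)=1$, and $f(\bm{1}_A)=0$ for $A\in\mathcal{I}$, not the full representation theorem. Both are valid; yours is a legitimate alternative proof.
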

\begin{proof}
    It follows by Theorem \ref{thm:secondrepresenation} and Theorem \ref{thm:Iclusterpoints}. 
\end{proof}


We conclude with a computation of the norm of $f \in \ell_\infty^{\,\prime}$ such that $f(e)=0$. 
\begin{lem}\label{lem:abramoich}
    Fix $f \in \ell_\infty^{\,\prime}$ with $f(e)=0$. Then 
    $$
    \|f\|=2\sup\{f(x): 0\le x\le e\}=2\sup\{f(\bm{1}_A): A\subseteq \omega\}
    .$$ 
\end{lem}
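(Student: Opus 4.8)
The plan is to prove the two equalities separately. Write $\alpha:=\sup\{f(x):0\le x\le e\}$ and $\beta:=\sup\{f(\bm1_A):A\subseteq\omega\}$; both are nonnegative (take $x=0$, resp. $A=\emptyset$) and $\beta\le\alpha$ is immediate, since $0\le\bm1_A\le e$ for every $A\subseteq\omega$. Also $\alpha<\infty$ because $f$ is continuous. So it is enough to show $\|f\|=2\alpha$ and $\alpha\le\beta$.

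First I would show $\|f\|=2\alpha$, using only $f(e)=0$. For the lower bound, given $x$ with $0\le x\le e$, the sequence $z:=2x-e$ satisfies $-e\le z\le e$, hence $\|z\|\le1$, and $f(z)=2f(x)-f(e)=2f(x)$; therefore $2f(x)=f(z)\le\|f\|$, and taking the supremum over such $x$ gives $2\alpha\le\|f\|$. For the upper bound, given $x$ with $\|x\|\le1$, the sequence $w:=(x+e)/2$ satisfies $0\le w\le e$ and $f(w)=\tfrac12 f(x)$, so $f(x)=2f(w)\le2\alpha$; applying the same to $-x$ yields $-f(x)\le2\alpha$, hence $|f(x)|\le2\alpha$, and taking the supremum over the unit ball gives $\|f\|\le2\alpha$.

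It remains to prove $\alpha\le\beta$, which is the only place a genuine estimate enters. Fix $x$ with $0\le x\le e$ and $N\in\N$, put $A_k:=\{n\in\omega:x_n\ge k/N\}$ for $k=1,\dots,N$, and set $y:=\tfrac1N\sum_{k=1}^N\bm1_{A_k}$. A direct check of the layer-cake approximation shows $\|x-y\|_\infty\le 1/N$ (indeed, if $j/N\le x_n<(j+1)/N$ then $n\in A_k$ exactly for $k\le j$, so $y_n=j/N$, and $y_n=x_n$ when $x_n=1$), and this is where the boundedness of $x$ is used. Consequently
$$
f(x)=f(y)+f(x-y)\le \frac1N\sum_{k=1}^N f(\bm1_{A_k})+\|f\|\,\|x-y\|_\infty\le \beta+\frac{\|f\|}{N}.
$$
Letting $N\to\infty$ gives $f(x)\le\beta$, and taking the supremum over all $x$ with $0\le x\le e$ yields $\alpha\le\beta$, completing the proof.

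I do not expect any serious obstacle here: the argument splits cleanly into the two scaling tricks $x\mapsto 2x-e$ and $x\mapsto(x+e)/2$ (which only exploit $f(e)=0$) and the uniform simple-function approximation of an element of $[0,e]$ by nonnegative rational combinations of characteristic functions, the latter being the only computation worth spelling out.
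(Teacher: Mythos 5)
Your proof is correct, and it takes a genuinely more self-contained route than the paper's. The paper bounds $\|f\|\le 2\sup\{f(x):0\le x\le e\}$ by writing each $x$ in the unit ball as a difference $a-b$ of disjoint elements of $[0,e]$ and using $f(-b)=f(e-b)$, obtains $\|f\|\ge 2\sup\{f(\bm{1}_A):A\subseteq\omega\}$ by testing on $\bm{1}_A-\bm{1}_{\omega\setminus A}$, and then closes the gap by \emph{citing} a theorem of Abramovich for the identity $\sup\{f(\bm{1}_A):A\subseteq\omega\}=\sup\{f(x):0\le x\le e\}$. You replace both ingredients: the scaling maps $x\mapsto 2x-e$ and $x\mapsto(x+e)/2$ give $\|f\|=2\alpha$ using only $f(e)=0$ and linearity, with no appeal to the lattice structure of $\ell_\infty^{\,\prime}$; and your layer-cake approximation $y=\frac{1}{N}\sum_{k=1}^N\bm{1}_{A_k}$ with $\|x-y\|\le 1/N$ proves the Abramovich identity from scratch --- the estimate $f(x)\le\beta+\|f\|/N$ is right, and finiteness of $\|f\|$ (i.e., continuity of $f$) is all it needs. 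The paper's argument buys brevity and a pointer to a general order-theoretic fact valid in wider contexts; yours buys a completely elementary, citation-free proof tailored to $\ell_\infty$. Both are valid.
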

\begin{proof}
Set $s:=\sup\{f(x): 0\le x\le e\}$. On the one hand, we have 
\begin{displaymath}
    \begin{split}
        \|f\|&=\sup\{f(x): -e\le x\le e\}\\
        &=\sup\{f(a-b): 0\le a,b\le e, a\wedge b=0\}\\
        &\le \sup\{f(a)+f(-b): 0\le a,b\le e\}\\
        &= \sup\{f(a)+f(e-b): 0\le a,b\le e\}\\
        &= \sup\{f(a)+f(b): 0\le a,b\le e\}\le 2s.
    \end{split}
\end{displaymath}
On the other hand, we have that
\begin{displaymath}
    \begin{split}
         \|f\|&\ge \sup\{f(\bm{1}_A-\bm{1}_{\omega\setminus A}): A\subseteq \omega\}\\
         &=\sup \{f(\bm{1}_A)+f(e-\bm{1}_{\omega\setminus A}): A\subseteq \omega\}\\
         &=2\sup \{f(\bm{1}_A): A\subseteq \omega\}. 
    \end{split}
\end{displaymath}
To conclude, it is enough to observe that the equality $\sup \{f(\bm{1}_A): A\subseteq \omega\}=s$ follows from a result of Abramovich, see \cite[Theorem 1.50]{MR2262133} and \cite[p. 541]{MR2378491}.
\end{proof}


\section{Proofs of Main results}\label{sec:proofs}

In this section, we provide the proofs of our main results. 
\begin{lem}\label{lem:Iinvariant}
    Let $\mathcal{I}$ be an ideal on $\omega$ and fix $f \in \mathrm{SL}(\mathcal{I})$ and $A,B \subseteq \omega$ with $A\bigtriangleup B \in \mathcal{I}$. Then $f(\bm{1}_A)=f(\bm{1}_B)$. 
\end{lem}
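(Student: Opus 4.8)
The plan is to reduce everything to property~\ref{item:3SI} in Definition~\ref{defi:SLI} together with the linearity of $f$. The starting observation is that $A\bigtriangleup B=(A\setminus B)\cup(B\setminus A)$ belongs to $\mathcal{I}$ by hypothesis, and since $\mathcal{I}$ is stable under subsets, each of the two sets $A\setminus B$ and $B\setminus A$ lies in $\mathcal{I}$ as well. This is the only place where the structure of an ideal is used, and it is the step one has to be slightly careful about, though it is entirely routine.

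Next I would write the two characteristic sequences in terms of the common part $A\cap B$, namely
$$
\bm{1}_A=\bm{1}_{A\cap B}+\bm{1}_{A\setminus B}
\qquad\text{and}\qquad
\bm{1}_B=\bm{1}_{A\cap B}+\bm{1}_{B\setminus A},
$$
which hold pointwise because the sets $A\cap B$ and $A\setminus B$ (resp.\ $A\cap B$ and $B\setminus A$) are disjoint with union $A$ (resp.\ $B$).

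Finally, applying $f$ and using its linearity gives
$$
f(\bm{1}_A)-f(\bm{1}_B)=f(\bm{1}_{A\setminus B})-f(\bm{1}_{B\setminus A}),
$$
and since $A\setminus B\in\mathcal{I}$ and $B\setminus A\in\mathcal{I}$, property~\ref{item:3SI} forces both terms on the right-hand side to vanish. Hence $f(\bm{1}_A)=f(\bm{1}_B)$, as claimed. There is no real obstacle here; the lemma is essentially an unpacking of the definition of $\mathrm{SL}(\mathcal{I})$, and its purpose is presumably to streamline later arguments where sets are manipulated modulo $\mathcal{I}$.
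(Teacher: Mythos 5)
Your proof is correct. It differs slightly from the paper's: the paper's one-line argument uses positivity of $f$, via the pointwise inequality $|\bm{1}_A-\bm{1}_B|=\bm{1}_{A\bigtriangleup B}$, to get $|f(\bm{1}_A)-f(\bm{1}_B)|\le f(\bm{1}_{A\bigtriangleup B})=0$, applying property \ref{item:3SI} only to the single set $A\bigtriangleup B$. You instead decompose $\bm{1}_A$ and $\bm{1}_B$ over the common part $A\cap B$ and apply \ref{item:3SI} to the two differences $A\setminus B$ and $B\setminus A$, which requires the downward closure of $\mathcal{I}$ but, notably, does not use positivity at all --- only linearity. So your argument is marginally more general (it would apply to any linear functional vanishing on characteristic sequences of sets in $\mathcal{I}$), while the paper's is marginally shorter and exploits the order structure that is available anyway for elements of $\mathrm{SL}(\mathcal{I})$. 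Both are complete and essentially equally routine.
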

\begin{proof}
    This follows by $0\le |f(\bm{1}_A)-f(\bm{1}_B)|\le f(\bm{1}_{A\bigtriangleup B})=0$.
\end{proof}

Hereafter, we denote by $ba$ the space of signed finitely additive measures $\mu :\mathcal{P}(\omega)\to \mathbf{R}$ with finite total variation, that is, such that 
$$
\sup \left\{ \sum\nolimits_{i=0}^{n}|\mu (A_{i})|:\{A_{0},A_1,\ldots ,A_{n}\}%
\text{ is a partition of }\omega\right\} <\infty,
$$
see e.g. \cite[Section 10.10]{MR2378491}. 
Recall that the norm dual $\ell _{\infty}^{\,\prime}$ can be identified with $ba$ via the lattice isomorphism $T:\ell _{\infty}^{\,\prime}\to ba$ defined by%
\begin{equation}\label{eq:latticeisomorphism}
\forall f \in \ell _{\infty}^{\,\prime},\forall A\subseteq \omega,\quad
T(f)(A):=f(\bm{1}_{A}),
\end{equation}
see e.g. \cite[Theorem 14.4]{MR2378491}. 
We endow both $ba$ and the norm dual $\ell _{\infty }^{\,\prime}$ with the weak%
$^{\star }$ topology. Note that $T$ is continuous and its inverse is given by%
\begin{equation}\label{eq:Tinverserecall}
\forall \mu \in ba,\forall x\in \ell _{\infty },\quad 
T^{-1}(\mu)(x)=\int_{\mathbf{\omega}}x\,\mathrm{d}\mu . 
\end{equation}


\begin{rmk}\label{rmk:equivalentdefi}
   By the above premises, it is easy to check that a linear continuous functional $f \in \ell_\infty^{\,\prime}$ belongs to $\mathrm{SL}(\mathcal{I})$ if and only if the associated finitely additive measure $\mu:=T(f)$ is nonnegative, normalized, and assigns $0$ to all sets in $\mathcal{I}$ (or, equivalently, it is $\mathcal{I}$-invariant). 

   This implies that 
   item \ref{item:2SI} in Definition \ref{defi:SLI} can be replaced by $f(e)=1$ (in fact, the continuity of $f$ follows by item \ref{item:1SI}). In addition, item \ref{item:3SI} can be replaced by the equivalent (formally stronger) $f(x)=0$ for all 
   $x \in c_{00}(\mathcal{I})\cap \ell_\infty$. 
%
   Lastly, it is easy to see that $\mathrm{SL}(\mathcal{I})$ coincides also with the set of positive linear functionals $f$ such that $f(x)=\mathcal{I}\text{-}\lim x$ for all $x \in c(\mathcal{I})\cap \ell_\infty$, cf. also Corollary \ref{cor:liminflimsup} below. 
\end{rmk}

\begin{proof}[Proof of Theorem \ref{thm:firstrepresenation}]
Fix $f \in \mathrm{SL}(\mathcal{I})$ and recall that $f$ is continuous since it satisfies \ref{item:1SIprime}. Using the lattice isomorphism $T$ defined in \eqref{eq:latticeisomorphism}, let $\mu:=T(f)$ be the nonnegative finitely additive normalized measure $\mathcal{P}(\omega)\to \mathbf{R}$ associated with $f$. 
Note that $\mu$ is $\mathcal{I}$-invariant, thanks to Lemma \ref{lem:Iinvariant}. 
It follows by \eqref{eq:Tinverserecall} that $f(x)=\int_{\omega}x\,\mathrm{d}\mu$ for all $x \in \ell_\infty$. 
Also, for each ultrafilter $\mathcal{F}$ on $\omega$, let $\mu_{\mathcal{F}}$ be the $\{0,1\}$-valued finitely additive probability measure defined by $\mu_{\mathcal{F}}(A)=1$ if and only if $A \in \mathcal{F}$. 
Taking again into account \eqref{eq:Tinverserecall}, we obtain by \cite[Theorem 1.1]{MR4693672} that there exists a normalized capacity $\rho: \mathscr{B}(\mathrm{Ult}(\mathcal{I}))\to \mathbf{R}$ such that 
$$
f(x)=\int_{\mathbf{\omega}}x\,\mathrm{d}\mu
=\int_{\mathrm{Ult}(\mathcal{I})} \left(\int_{\omega}x\,\mathrm{d}\mu_{\mathcal{F}}\right) \, \mathrm{d}\rho(\mathcal{F})
=\int_{\mathrm{Ult}(\mathcal{I})} {\mathcal{F}}\text{-}\lim x \, \mathrm{d}\rho(\mathcal{F})
$$
for each $x \in \ell_\infty$. This completes the proof. 
\end{proof}

\medskip

\begin{proof}[Proof of Theorem \ref{thm:secondrepresenation}]
    It is clear that $\mathrm{SL}(\mathcal{I})$ is a weak$^{\star}$ closed convex subset of $\ell_{\infty }^{\,\prime}$. In addition, since it is included in the closed unit ball of $\ell_\infty^{\,\prime}$, it is weak$^\star$ compact by Alaoglu's theorem, see e.g. \cite[Theorem 5.105]{MR2378491}. By the lattice isomorphism $T$ defined in \eqref{eq:latticeisomorphism}, we have that 
    $$
    \mathrm{ext}(\mathrm{SL}(\mathcal{I}))
    =T^{-1}\left[\,\mathrm{ext}\left(T[\mathrm{SL}(\mathcal{I})]\right)\right]
    =T^{-1}\left[\{\mu_{\mathcal{F}}: \mathcal{F} \in \mathrm{Ult}(\mathcal{I})\}\right],
    $$
    see \cite[Claim 4]{MR4693672} and cf. \cite[p. 544]{MR2378491} (here, as usual, $\mathrm{ext}(S)$ stands for the set of extreme points of a nonempty $S\subseteq \ell_\infty^{\,\prime}$).  
    Hence by the inverse $T^{-1}$ in \eqref{eq:Tinverserecall} we obtain $\mathrm{ext}(\mathrm{SL}(\mathcal{I}))=\{f_{{\mathcal{F}}}: \mathcal{F} \in \mathrm{Ult}(\mathcal{I})\}$. The conclusion follows by applying Krein--Milman's theorem, see e.g. \cite[Theorem 7.68]{MR2378491}.
\end{proof}

\medskip

\begin{proof}[Proof of Theorem \ref{thm:weakdiameter}]
    If $\mathcal{I}$ is maximal then it follows by Theorem \ref{thm:secondrepresenation} that $\mathrm{SL}(\mathcal{I})=\{f_{\mathcal{I}}\}$, hence $\mathrm{diam}(\mathrm{SL}(\mathcal{I}))=0$. Conversely, if $\mathcal{I}$ is not maximal, there exists a set $A\subseteq \omega$ such that $A\notin \mathcal{I}$ and $\omega\setminus A\notin \mathcal{I}$. Define $x:=\bm{1}_A-\bm{1}_{\omega\setminus A}$, so that $\Gamma_x(\mathcal{I})=\{-1,1\}$. It follows by Theorem \ref{thm:secondrepresenation} and Theorem \ref{thm:Iclusterpoints} that there exist $f,g \in \mathrm{SL}(\mathcal{I})$ such that $f(x)=1$ and $g(x)=-1$. Therefore $\mathrm{diam}(\mathrm{SL}(\mathcal{I}))\ge \|f-g\|\ge |f(x)-g(x)|=2$. 
\end{proof}

\medskip

\begin{proof}
[Proof of Theorem \ref{thm:strongdiameter}]
    Since $f \in \mathrm{SL}(\mathcal{I})$, it follows by definition that $\mathcal{I}\subseteq \{A\subseteq \omega: f(\bm{1}_A)=0\}$. On the other hand, thanks to \cite[Corollary 2.10]{MR4358610}, the latter inclusion has to be strict, hence there exists $A\subseteq \omega$ such that $A\notin \mathcal{I}$ and $f(\bm{1}_A)=0$. Since $f(\bm{1}_\omega)=1$, we get by Lemma \ref{lem:Iinvariant} that also $\omega\setminus A\notin \mathcal{I}$. At this point, define the sequence 
    $
    x:=\bm{1}_A-\bm{1}_{\omega\setminus A}.
    $ 
    Then $\Gamma_{x}(\mathcal{I})=\{1,-1\}$ and $f(x)=f(\bm{1}_A)-f(\bm{1}_{\omega\setminus A})=-1$. It also follows by Theorem \ref{thm:secondrepresenation} that there exists $g \in \mathrm{SL}(\mathcal{I})$ such that $g(x)=1$. Therefore $\|f-g\|\ge |f(x)-g(x)|=2$. 
\end{proof}

\begin{rmk}\label{rmk:kadets}
As it follows by \cite[Proposition 2.11]{MR4358610} and the proof above, there exists an ideal $\mathcal{I}$ on $\omega$ which is \emph{not} meager and, on the other hand, satisfies the claim of Theorem \ref{thm:strongdiameter}. 
\end{rmk}

\medskip

Before we proceed with the proof of Theorem \ref{thm:differenceSIlimits}, 
recall that the topological dual $\ell_\infty^{\,\prime}$ is a Dedekind complete Riesz spaces, and its partial order satisfies 
\begin{equation}\label{eq:fveeg}
(f\vee g)(x)=\sup\{f(u)+g(v): x=u+v, u,v\ge 0\}  
\end{equation}
for all $f,g \in \ell_\infty^{\,\prime}$ and all $x \in \ell_\infty^+$, 
see e.g. \cite[Theorem 1.18 and Theorem 3.49]{MR2262133}. 

\begin{proof}
[Proof of Theorem \ref{thm:differenceSIlimits}]
    It is clear that each element of $\mathrm{SL}(\mathcal{I})-\mathrm{SL}(\mathcal{I})$ satisfies properties \ref{item:aSLSL}--\ref{item:cSLSL}. Conversely, suppose that $f$ satisfies properties \ref{item:aSLSL}--\ref{item:cSLSL}. 
    Fix an arbitrary $f_0 \in \mathrm{SL}(\mathcal{I})$, and define
    \begin{equation}\label{eq:definitiong}
    g:=(f \vee 0)+\left(1-\frac{\|f\|}{2}\right)f_0,
    \end{equation}
    where the supremum is computed as in \eqref{eq:fveeg}. Of course, $g$ is a continuous linear functional on $\ell_\infty$. We observe that $g$ is positive, indeed for every $x\ge 0$ we have $(f\vee 0)(x)\ge 0$, $f_0(x) \ge 0$, and $1-\|f\|/2\ge 0$, hence $g(x)\ge 0$.  
    In addition, for every $A \in \mathcal{I}$ we have  
    $f_0(\bm{1}_A)=0$ and $(f\vee 0)(\bm{1}_A)=\sup\{f(x): 0\le x\le \bm{1}_A\}=\sup\{f(\bm{1}_B): B\subseteq A\}=0$, cf. again \cite[Theorem 1.50]{MR2262133} and \cite[p. 541]{MR2378491}. 
    Lastly, we observe by Lemma \ref{lem:abramoich} that
    \begin{displaymath}
        \begin{split}
            g(e)&=(f\vee 0)(e)+\left(1-\frac{\|f\|}{2}\right)f_0(e)\\
            &=\sup\{f(x): 0\le x\le e\}+\left(1-\frac{\|f\|}{2}\right)=1. 
        \end{split}
    \end{displaymath}
    This implies, thanks to 
    Remark \ref{rmk:equivalentdefi}, that $g \in \mathrm{SL}(\mathcal{I})$. 
    
    At this point, it is enough to show that 
    $$
    h:=g-f=((-f)\vee 0)+\left(1-\frac{\|f\|}{2}\right)f_0
    $$
    belongs to $\mathrm{SL}(\mathcal{I})$ as well. The proof goes analogously, by noting that $\sup\{-f(x): 0\le x\le e\}$ can be rewritten equivalently as $\sup\{f(e-x): 0\le x\le e\}$ or $\sup\{f(x): 0\le x\le e\}$ (we omit further details). 

    \medskip
    
    For the second part, suppose first that $\mathcal{I}$ is maximal. Note that there exists exactly one $\mathrm{S}_{\mathcal{I}}$-limit, which is $f_0(x):=\mathcal{I}\text{-}\lim x$ for all $x \in \ell_\infty$, cf. Theorem \ref{thm:secondrepresenation}. Then a linear functional $f \in \ell_\infty^{\,\prime}$ which satisfies properties \ref{item:aSLSL}--\ref{item:cSLSL} is necessarily the null functional: in fact, for each $A\subseteq \omega$ we have $f(\bm{1}_A)=0$ if $A \in \mathcal{I}$ and $f(\bm{1}_A)=f(e-\bm{1}_{\omega\setminus A})=-f(\bm{1}_{\omega\setminus A})=0$ if $A\notin \mathcal{I}$. It follows that $0=f=f_0-f_0$, cf. Remark \ref{rmk:equivalentdefi}, hence the decomposition is unique. 

    Second, suppose that $\|f\|=2$ and that $f=g-h$ for some $g,h \in \mathrm{SL}(\mathcal{I})$. Thanks to Lemma \ref{lem:abramoich}, we have $2=\|f\|=2\sup\{g(\bm{1}_A)-h(\bm{1}_A): A\subseteq \omega\}$ and, at the same time, $1=\|g\|=\sup\{g(\bm{1}_A): A\subseteq \omega\}$. It follows that, for each $n \in \omega$, there exists $A_n\subseteq \omega$ such that 
    $$
    1-2^{-n}\le g(\bm{1}_{A_n})\le 1\,\,\text{ and }0\le h(\bm{1}_{A_n})\le 2^{-n}.
    $$
    Since $g,h \in \mathrm{SL}(\mathcal{I})$, it follows also that 
    $$
    0\le g(\bm{1}_{\omega\setminus A_n})\le 2^{-n}\,\,\text{ and }1-2^{-n}\le h(\omega\setminus \bm{1}_{A_n})\le 1.
    $$
    For each $A\subseteq \omega$, we obtain that 
 \begin{displaymath}
        \begin{split}
0\le (g\wedge h)(\bm{1}_A)&=\inf\{g(u)+h(v):\bm{1}_A=u+v,u,v\ge 0\}\\
&\le \inf\{g(\bm{1}_B)+h(\bm{1}_{A\setminus B}):B\subseteq A\}\\
&\le \inf\{g(\bm{1}_{(\omega\setminus A_n) \cap A})+h(\bm{1}_{A\setminus (\omega\setminus A_n)}):n \in \omega\}\\
&\le \inf\{g(\bm{1}_{\omega\setminus A_n})+h(\bm{1}_{A_n}):n \in \omega\}=0. 
\end{split}
    \end{displaymath}    
It follows, again by Lemma \ref{lem:abramoich}, that 
  \begin{displaymath}
        \begin{split}
\|g\wedge h\|=2\sup\{(g\wedge h)(\bm{1}_A): A\subseteq \omega\}=0. 
\end{split}
    \end{displaymath}   
Therefore $f=g-h$ for some $g,h\in \ell_\infty^{\,\prime}$ such that $g\wedge h=0$. We conclude by \cite[Theorem 1.5(b)]{MR2262133} that $g=f^+$ and $h=f^-$, hence the decomposition is unique (note that the definition of $g$ coincides with the one provided in \eqref{eq:definitiong}, hence also the definition of $h:=g-f$).
    
    

    Lastly, suppose $\|f\|<2$ and that $\mathcal{I}$ is not maximal, so that $1-\|f\|/2>0$ and there exist two distinct maximal ideals $\mathcal{J}_1$, $\mathcal{J}_2$ which extend $\mathcal{I}$. Then it is easy to see that the function $g$ defined in \eqref{eq:definitiong} with $f_0(x):=\mathcal{J}_1\text{-}\lim x$ is different from the one choosing $f_0(x):=\mathcal{J}_2\text{-}\lim x$ (note that they are both $\mathrm{S}_\mathcal{I}$-limits, as it follows by Theorem \ref{thm:secondrepresenation}). Hence there exist at least two decompositions of $f$. 
\end{proof}

The following corollary is immediate: 
\begin{cor}
    Let $\mathcal{I}$ be an ideal on $\omega$ and fix $f \in \ell_\infty^{\,\prime}$ such that $f(e)=f(\bm{1}_A)=0$ for all $A\in \mathcal{I}$. Then for every real $k \ge \|f\|/2$ there exist $g,h \in \mathrm{SL}(\mathcal{I})$ such that $f=k(g-h)$. 
\end{cor}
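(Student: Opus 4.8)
The plan is to reduce everything to Theorem \ref{thm:differenceSIlimits} by a single rescaling, after disposing of the degenerate case separately.

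First I would treat the case $k=0$. Then the hypothesis $k\ge \|f\|/2$ forces $\|f\|=0$, i.e.\ $f=0$. Since $\mathrm{SL}(\mathcal{I})\neq \emptyset$ (as recalled in Section \ref{sec:intro}, e.g.\ $f_{\mathcal{J}}\in \mathrm{SL}(\mathcal{I})$ for any maximal ideal $\mathcal{J}\supseteq \mathcal{I}$), we may pick $g=h\in \mathrm{SL}(\mathcal{I})$, and then trivially $f=0=k(g-h)$.

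Next, for $k>0$, I would set $\tilde f:=k^{-1}f\in \ell_\infty^{\,\prime}$ and check that $\tilde f$ satisfies conditions \ref{item:aSLSL}--\ref{item:cSLSL} of Theorem \ref{thm:differenceSIlimits}: condition \ref{item:aSLSL} holds since $\tilde f(e)=k^{-1}f(e)=0$; condition \ref{item:cSLSL} holds since $\tilde f(\bm{1}_A)=k^{-1}f(\bm{1}_A)=0$ for all $A\in \mathcal{I}$; and condition \ref{item:bSLSL} holds because $\|\tilde f\|=k^{-1}\|f\|\le 2$, using precisely the hypothesis $k\ge \|f\|/2$. Theorem \ref{thm:differenceSIlimits} then yields $g,h\in \mathrm{SL}(\mathcal{I})$ with $\tilde f=g-h$, whence $f=k\tilde f=k(g-h)$, as desired.

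There is essentially no obstacle here: the only points requiring a word of care are the edge case $k=0$ (where one invokes nonemptiness of $\mathrm{SL}(\mathcal{I})$ rather than Theorem \ref{thm:differenceSIlimits}) and the observation that norm, the functional $e$, and the characteristic sequences $\bm{1}_A$ all scale linearly, so that conditions \ref{item:aSLSL}--\ref{item:cSLSL} are preserved under the dilation $f\mapsto k^{-1}f$ exactly when $k\ge \|f\|/2$.
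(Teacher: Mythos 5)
Your proof is correct and follows essentially the same route as the paper: the paper likewise defines $f_0:=f/k$ and applies Theorem \ref{thm:differenceSIlimits}, disposing of the degenerate case first (the paper splits on $f=0$ rather than $k=0$, but this is immaterial). Your explicit verification of conditions \ref{item:aSLSL}--\ref{item:cSLSL} under the dilation is exactly what the paper leaves implicit.
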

\begin{proof}
    If $f=0$ the claim is clear. Otherwise, fix $k\ge \|f\|/2$ (which is positive) and define $f_0:=f/k$. The claim follows applying Theorem \ref{thm:differenceSIlimits} to $f_0$. 
\end{proof}

\medskip

We conclude with the proof of Theorem \ref{thm:freedman}.
\begin{proof}[Proof of Theorem \ref{thm:freedman}] 
    The equality $\overline{c(\mathcal{I}^\star) \cap \ell_\infty}=c(\mathcal{I}) \cap \ell_\infty$ is well known, see e.g. \cite[Theorem 2.4]{MR2181783}. Hence, it is enough to show that $\mathscr{V}(\mathcal{I})=c(\mathcal{I}) \cap \ell_\infty$. 
    For, suppose that $x \in \mathscr{V}(\mathcal{I})$. Since $x \in \ell_\infty$, then $\Gamma_x(\mathcal{I})\neq \emptyset$, see e.g. \cite[Lemma 3.1(vi)]{MR3920799}. Suppose that $\eta_1,\eta_2 \in \Gamma_x(\mathcal{I})$. Thanks to Theorem \ref{thm:Iclusterpoints}, there exists maximal ideals $\mathcal{J}_1, \mathcal{J}_2$ containing $\mathcal{I}$ such that $\mathcal{J}_1\text{-}\lim x=\eta_1$ and $\mathcal{J}_2\text{-}\lim x=\eta_2$. It follows by 
    Theorem \ref{thm:secondrepresenation} 
    that $\eta_1=\eta_2$, hence $|\Gamma_x(\mathcal{I})|=1$. This implies that $x$ is $\mathcal{I}$-convergent, see e.g. \cite[Corollary 3.4]{MR3920799}, therefore $\mathscr{V}(\mathcal{I})\subseteq c(\mathcal{I}) \cap \ell_\infty$. 

    Viceversa, pick a sequence $x \in c(\mathcal{I}) \cap \ell_\infty$ and define $\eta:=\mathcal{I}\text{-}\lim x$. Of course, ${\mathcal{F}}\text{-}\lim x=\eta$ for every ultrafilter $\mathcal{F} \in \mathrm{Ult}(\mathcal{I})$. Pick also a $S_{\mathcal{I}}$-limit $f$. Thanks to Theorem \ref{thm:firstrepresenation}, there exists a normalized capacity $\rho: \mathscr{B}(\mathrm{Ult}(\mathcal{I}))\to \mathbf{R}$ which satisfies \eqref{eq:firstrepresetnation}. This implies that 
    $$
    f(x)=\int_{\mathrm{Ult}(\mathcal{I})} {\mathcal{F}}\text{-}\lim x \, \mathrm{d}\rho(\mathcal{F})
    =\int_{\mathrm{Ult}(\mathcal{I})} \eta \, \mathrm{d}\rho(\mathcal{F})=\eta, 
    $$
    so that $x \in \mathscr{V}(\mathcal{I})$. Hence $c(\mathcal{I}) \cap \ell_\infty\subseteq \mathscr{V}(\mathcal{I})$. 
    
    \smallskip

    For the second part, suppose that $\mathcal{I}$ is a $P$-ideal. Then $c(\mathcal{I})=c+c_{00}(\mathcal{I})$ (the case $\mathcal{I}=\mathcal{Z}$ can be found in \cite[Theorem 2.3]{MR954458}, the general case goes analogously). We conclude that $\mathscr{V}(\mathcal{I})=c(\mathcal{I})\cap \ell_\infty=(c+c_{00}(\mathcal{I}))\cap \ell_\infty=c+(c_{00}(\mathcal{I})\cap \ell_\infty)$. 
\end{proof}

\section{Further results}\label{sec:furtherresults}

As an application of Theorem \ref{thm:differenceSIlimits}, we can compute the distance of sequences in $\ell_\infty$ from the space of bounded $\mathcal{I}$-convergent sequences. To this aim, for each nonempty $Y\subseteq \ell_\infty$ and $x \in \ell_\infty$, define 
$$
\mathrm{dist}(x,Y):=\inf\{\|x-y\|: y \in Y\}.
$$
(The case $\mathcal{I}=\mathrm{Fin}$ of the next result can be found, e.g., in \cite[Proposition 1.18]{MR1727673}.)
\begin{prop}\label{prop:distance}
Let $\mathcal{I}$ be an ideal on $\omega$. Then 
\begin{equation}\label{eq:claimed}
\forall x \in \ell_\infty, \quad \mathrm{dist}(x,c(\mathcal{I}) \cap \ell_\infty)=\frac{1}{2}\left(\mathcal{I}\text{-}\limsup x-\mathcal{I}\text{-}\liminf x\right).
\end{equation}
\end{prop}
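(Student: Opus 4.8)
The plan is to prove the two inequalities separately. Write $\beta := \mathcal{I}\text{-}\limsup x$ and $\alpha := \mathcal{I}\text{-}\liminf x$, so that the claim is $\mathrm{dist}(x, c(\mathcal{I})\cap\ell_\infty) = \tfrac12(\beta-\alpha)$.

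For the lower bound $\mathrm{dist}(x, c(\mathcal{I})\cap\ell_\infty) \ge \tfrac12(\beta-\alpha)$: fix any $y \in c(\mathcal{I})\cap\ell_\infty$, say $\mathcal{I}\text{-}\lim y = \eta$. By Corollary \ref{cor:liminflimsup}, every $f \in \mathrm{SL}(\mathcal{I})$ satisfies $\alpha \le f(x) \le \beta$, and hence, since $\mathrm{SL}(\mathcal{I})$ is (by Theorem \ref{thm:weakdiameter}, or directly by Theorem \ref{thm:secondrepresenation} together with Theorem \ref{thm:Iclusterpoints}) rich enough to realize the endpoints, there exist $f, g \in \mathrm{SL}(\mathcal{I})$ with $f(x)$ close to $\beta$ and $g(x)$ close to $\alpha$. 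Since $y$ is $\mathcal{I}$-convergent, $f(y) = g(y) = \eta$ by Remark \ref{rmk:equivalentdefi} (or Theorem \ref{thm:freedman}, as $y \in \mathscr{V}(\mathcal{I})$). Then $\|x-y\| \ge |f(x-y)| = |f(x)-\eta|$ and likewise $\|x-y\| \ge |g(x)-\eta|$, so $2\|x-y\| \ge |f(x)-\eta| + |\eta - g(x)| \ge f(x) - g(x)$, which can be made arbitrarily close to $\beta - \alpha$. Taking the infimum over $y$ gives the lower bound.

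For the upper bound $\mathrm{dist}(x, c(\mathcal{I})\cap\ell_\infty) \le \tfrac12(\beta-\alpha)$: the natural candidate is the constant sequence $y := \tfrac{\alpha+\beta}{2}\, e$, which lies in $c \subseteq c(\mathcal{I})\cap\ell_\infty$. It suffices to show $\|x - y\| \le \tfrac12(\beta-\alpha)$ modulo a set in $\mathcal{I}$ — more precisely, to modify $x$ on a set in $\mathcal{I}$ so that the sup-norm distance to $y$ becomes exactly $\tfrac12(\beta-\alpha)$, or to directly exhibit a $z \in c(\mathcal{I})\cap\ell_\infty$ achieving the bound. The key point is the characterization of $\mathcal{I}$-cluster points: for every $\varepsilon > 0$, the set $\{n : x_n > \beta + \varepsilon\}$ and the set $\{n : x_n < \alpha - \varepsilon\}$ both lie in $\mathcal{I}$ (this is the standard fact that $\mathcal{I}\text{-}\limsup$ and $\mathcal{I}\text{-}\liminf$ bracket all $\mathcal{I}$-cluster points, so values exceeding $\beta$ or below $\alpha$ by a fixed amount occur only on an $\mathcal{I}$-set). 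Hence, setting $A_\varepsilon := \{n : x_n > \beta+\varepsilon \text{ or } x_n < \alpha - \varepsilon\} \in \mathcal{I}$, the sequence $z$ agreeing with $x$ off $A_\varepsilon$ and equal to $\tfrac{\alpha+\beta}{2}$ on $A_\varepsilon$ satisfies $z \in c(\mathcal{I})\cap\ell_\infty$? — one must be careful here: changing $x$ on an $\mathcal{I}$-set does not alter its $\mathcal{I}$-cluster points, but $z$ need not be $\mathcal{I}$-convergent. So instead, work directly with $y = \tfrac{\alpha+\beta}{2}e$ and note $\|x-y\|$ may be large, but $\mathcal{I}\text{-}\limsup|x-y| \le \tfrac12(\beta-\alpha)$; then invoke that $\mathrm{dist}(x, c(\mathcal{I})\cap\ell_\infty) \le \mathcal{I}\text{-}\limsup|x - y|$ for any $\mathcal{I}$-convergent $y$ — which itself needs an argument: given $\varepsilon$, replace $x-y$ by its truncation to $[-\tfrac12(\beta-\alpha)-\varepsilon, \tfrac12(\beta-\alpha)+\varepsilon]$, which agrees with $x-y$ off an $\mathcal{I}$-set and hence differs from the $\mathcal{I}$-convergent (indeed constant) deviation only by something supported on $\mathcal{I}$, giving a sequence in $c(\mathcal{I})$ within $\tfrac12(\beta-\alpha)+\varepsilon$ of $x$.

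The main obstacle is this last point: producing an honest element of $c(\mathcal{I})\cap\ell_\infty$ within the claimed distance, since naive truncation of $x$ lands in $c_{00}(\mathcal{I}) + (\text{bounded})$ but not obviously in $c(\mathcal{I})$. The clean fix is to take $z := y + t$, where $t$ is the truncation of $x - y$ to the band $[-r-\varepsilon, r+\varepsilon]$ with $r := \tfrac12(\beta-\alpha)$: since $\{n : |x_n - y_n| > r + \varepsilon\} \in \mathcal{I}$, we have $t = (x-y) + s$ with $s$ supported on a set in $\mathcal{I}$, so $t \in c_{00}(\mathcal{I})\cap\ell_\infty$ up to the constant — actually $x - y$ itself is bounded, and $t$ differs from the bounded sequence $x-y$ only on an $\mathcal{I}$-set, hence $t$ has the same $\mathcal{I}$-cluster points as $x - y$; but we need $t$ $\mathcal{I}$-convergent. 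It is not. Therefore the correct target is $z := y + \min(\max(x-y, -r-\varepsilon), r+\varepsilon) - (x - y) + (x-y)$... — this circularity signals that the right statement to prove directly is simply: $x$ agrees, off a set in $\mathcal{I}$, with a sequence whose values lie in $[\alpha-\varepsilon,\beta+\varepsilon]$, and then apply Theorem \ref{thm:freedman} in reverse via Remark \ref{rmk:equivalentdefi}; concretely, every $f \in \mathrm{SL}(\mathcal{I})$ has $f(x) \in [\alpha,\beta]$, so $|f(x) - \tfrac{\alpha+\beta}{2}| \le r$, and by Lemma \ref{lem:abramoich}-type duality the distance of $x$ to $\mathscr{V}(\mathcal{I}) = c(\mathcal{I})\cap\ell_\infty$ equals $\sup\{ |f(x) - g(x)| /? \}$ — the honest route is to use that $\mathrm{dist}(x, \mathscr V(\mathcal I)) = \tfrac12 \sup\{f(x) - g(x) : f, g \in \mathrm{SL}(\mathcal I)\}$, which follows from Theorem \ref{thm:differenceSIlimits} applied to functionals on the quotient $\ell_\infty / (c(\mathcal I)\cap\ell_\infty)$, and then Corollary \ref{cor:liminflimsup} identifies the right-hand side as $\tfrac12(\beta - \alpha)$.
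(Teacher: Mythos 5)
Your lower bound is correct, and it is a legitimately different route from the paper's: you dualize immediately, picking $f,g\in\mathrm{SL}(\mathcal I)$ with $f(x)=\beta$ and $g(x)=\alpha$ (these values are actually attained, by Corollary \ref{cor:liminflimsup}), using $f(y)=g(y)=\eta$ for $\mathcal I$-convergent $y$ (Remark \ref{rmk:equivalentdefi}) and $\|f\|=\|g\|=1$ to get $2\|x-y\|\ge f(x)-g(x)=\beta-\alpha$. The paper instead argues on the primal side via $\Gamma_{x-y}(\mathcal I)=\Gamma_x(\mathcal I)-\kappa$ and $\|z\|\ge\max\{|\eta|:\eta\in\Gamma_z(\mathcal I)\}$. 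Both work; yours is marginally slicker once the functional machinery is in place.

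The upper bound, however, has a genuine gap: you never actually produce either a witness $y\in c(\mathcal I)\cap\ell_\infty$ within distance $\tfrac12(\beta-\alpha)+\varepsilon$ of $x$, or a completed duality argument. Your elementary attempts fail because you modify $x$ on the wrong set. With $A_\varepsilon:=\{n: x_n>\beta+\varepsilon\text{ or }x_n<\alpha-\varepsilon\}\in\mathcal I$, the sequence that agrees with $x$ \emph{off} $A_\varepsilon$ and is constant \emph{on} $A_\varepsilon$ is, as you correctly observe, not $\mathcal I$-convergent. The fix is to reverse the roles: set $y_n:=x_n$ for $n\in A_\varepsilon$ and $y_n:=\tfrac12(\alpha+\beta)$ for $n\notin A_\varepsilon$. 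Then $y$ differs from a constant only on the $\mathcal I$-set $A_\varepsilon$, hence $y\in c(\mathcal I)\cap\ell_\infty$, while $x-y$ vanishes on $A_\varepsilon$ and satisfies $|x_n-y_n|\le\tfrac12(\beta-\alpha)+\varepsilon$ off it, so $\|x-y\|\le\tfrac12(\beta-\alpha)+\varepsilon$. Your final ``honest route'' is indeed essentially the paper's proof, but as written it is only a gesture: the identity $\mathrm{dist}(x,Y)=\sup\{f(x): f\in\ell_\infty^{\,\prime},\ \|f\|\le 1,\ f[Y]=\{0\}\}$ (Hahn--Banach) must be stated, and one must check that any such $f$ satisfies the hypotheses of Theorem \ref{thm:differenceSIlimits} for $2f$ --- namely $f(e)=0$ because $e\in Y$, and $f(\bm{1}_A)=0$ for $A\in\mathcal I$ because $\bm{1}_A\in Y$ --- before writing $2f=g-h$ with $g,h\in\mathrm{SL}(\mathcal I)$ and concluding $f(x)\le\tfrac12(\beta-\alpha)$ via Corollary \ref{cor:liminflimsup}. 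The phrase ``applied to functionals on the quotient'' does not substitute for these verifications. Either completion is short, but one of them must actually be carried out.
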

\begin{proof}
    Fix $x \in \ell_\infty$ and define $\eta_0:=\frac{1}{2}(\eta_++\eta_-)$ and $\delta_0:=\frac{1}{2}(\eta_+-\eta_-)$, where $\eta_+:=\mathcal{I}\text{-}\limsup x$ and $\eta_-:=\mathcal{I}\text{-}\liminf x$. Set also $Y:=c(\mathcal{I})\cap \ell_\infty$ and recall that $Y$ is closed, see e.g. Theorem \ref{thm:freedman}. If $x$ is a bounded $\mathcal{I}$-convergent sequence then both sides of \eqref{eq:claimed} are zero. Hence, we suppose hereafter that $x\notin Y$. 
    
    On the one hand, it is known (and easy to check) that, if $y \in Y$ is $\mathcal{I}$-convergent to $\kappa$, then $\Gamma_{x-y}(\mathcal{I})=\Gamma_{x-\kappa e}(\mathcal{I})=\Gamma_x(\mathcal{I})-\kappa$, cf. also \cite[Proposition 3.2]{MR4428911} for a more general result in this direction. Hence 
    \begin{displaymath}
        \begin{split}
            \mathrm{dist}(x,Y) &\ge \inf\left\{ \max \{|\eta|:\eta \in \Gamma_{x-y}(\mathcal{I})\}: y \in Y \right\} \\
    &=\inf\{\max \{|\eta|:\eta \in \Gamma_{x}(\mathcal{I})-\kappa\}: \kappa \in \mathbf{R}\}\\
    &\ge \inf\{\max\{|\eta_+-\kappa|,|\eta_--\kappa|\}:\kappa\in \mathbf{R}\}\ge \delta_0.
        \end{split}
    \end{displaymath}

    On the other hand, it follows by a consequence of Hahn--Banach theorem that there exists $f \in \ell_{\infty}^{\,\prime}$ such that 
    $$
    \|f\|=1,\,\, f(x)=\mathrm{dist}(x,Y),\,\,\text{and}\,\, f[Y]=\{0\},
    $$
    see e.g. \cite[Theorem 2.3.22]{MR3823238}. 
    In particular, since $e \in Y$, we have $f(e)=0$. It follows by Theorem \ref{thm:differenceSIlimits} that there exists (unique) $g,h \in \mathrm{SL}(\mathcal{I})$ such that $2f=g-h$. Considering that $\eta_- \le w(x)\le \eta_+$ for every $w \in \mathrm{SL}(\mathcal{I})$, see Corollary \ref{cor:liminflimsup}, we obtain that 
    $\mathrm{dist}(x,Y)=f(x)=(g(x)-h(x))/2 \le (\eta_+-\eta_-)/2=\delta_0$. 
    %
\end{proof}

\begin{rmk}
    It is worth noting that the upper bound $\mathrm{dist}(x,Y)\le \delta_0$ can be obtained also by elementary means without the aid of $\mathrm{S}_{\mathcal{I}}$-limits. To this aim, for each $k \in \omega$, define $A_k:=\{n \in \omega: |x_n-\eta_0|\ge \delta_0(1+2^{-k})\}$, and note that $A_k \in \mathcal{I}$. For each $k \in \omega$, define also the sequence $y^k\in Y$ by $y^k_n:=x_n$ if $n \in A_k$ and $y^k_n:=\eta_0$ otherwise. It follows that 
    \begin{displaymath}
        \begin{split}
            \mathrm{dist}(x,Y) &\le \inf\{\|x-y^k\|: k \in \omega\}\\
            &=\inf\{\sup\{|x_n-\eta_0|: n \in \omega\setminus A_k\}: k \in \omega\}\\
            &\le \inf\{\delta_0(1+2^{-k}): k\in \omega\}=\delta_0.
    \end{split}
    \end{displaymath}
\end{rmk}

For our last application, 
a point $\eta \in \mathbf{R}$ is a $\mathcal{I}$\emph{-limit point} of a sequence $x \in \ell_\infty$ if there exists a strictly increasing sequence $(a_n: n \in \omega)$ in $\omega$ such that $\{a_n: n \in \omega\}\notin \mathcal{I}$ and $\lim_n x_{a_n}=\eta$; see e.g. \cite{MR1181163, MR1416085} in the case $\mathcal{I}=\mathcal{Z}$. We denote by $\Lambda_x(\mathcal{I})$ the set of $\mathcal{I}$-limit points of $x \in \ell_\infty$. Analogously to Theorem \ref{thm:Iclusterpoints}, we provide a characterization of $\mathcal{I}$-limit points, which generalizes \cite[Theorem 2(2)]{MR1372186}.

\begin{thm}\label{thm:Ilimitpoints}
    Let $\mathcal{I}$ be an ideal on $\omega$, and fix $x \in \ell_\infty$ and $\eta \in \mathbf{R}$. Then $\eta \in \Lambda_x(\mathcal{I})$ if and only if there exists $A\subseteq \omega$ such that $\hat{x}[\,\tilde{A}\,]=\{\eta\}$ and $\tilde{A} \cap \mathrm{Ult}(\mathcal{I})\neq \emptyset$. 
\end{thm}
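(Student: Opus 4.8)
The plan is to prove both directions by unpacking the definition of an $\mathcal{I}$-limit point in terms of subsequences supported on sets not in $\mathcal{I}$, and then translating convergence of a subsequence along a set $A$ into the condition $\hat{x}[\tilde A]=\{\eta\}$ via Lemma \ref{lem:bnsubseqconvergence}, and translating $A\notin\mathcal{I}$ into $\tilde A\cap\mathrm{Ult}(\mathcal{I})\neq\emptyset$ via Lemma \ref{lem:bndensityzero}. These two lemmas do essentially all the work; the only genuine content beyond bookkeeping is being careful about the distinction between ``$A$ is the range of the increasing sequence $(a_n)$'' and ``$\lim_n x_{a_n}=\eta$'' versus ``$\lim_{n\in A}x_n=\eta$,'' which agree once $A$ is enumerated increasingly.

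First I would prove the forward implication. Suppose $\eta\in\Lambda_x(\mathcal{I})$, so there is a strictly increasing sequence $(a_n:n\in\omega)$ in $\omega$ with $A:=\{a_n:n\in\omega\}\notin\mathcal{I}$ and $\lim_n x_{a_n}=\eta$. Since $(a_n)$ is strictly increasing, it is precisely the increasing enumeration of the infinite set $A$ (note $A$ is infinite because $\mathrm{Fin}\subseteq\mathcal{I}$ and $A\notin\mathcal{I}$), so $\lim_n x_{a_n}=\eta$ is exactly the hypothesis of Lemma \ref{lem:bnsubseqconvergence}, giving $\hat{x}[\tilde A]=\{\eta\}$. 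Moreover $A\notin\mathcal{I}$ together with Lemma \ref{lem:bndensityzero} yields $\tilde A\cap\mathrm{Ult}(\mathcal{I})\neq\emptyset$. This furnishes the required set $A$.

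Conversely, suppose there is $A\subseteq\omega$ with $\hat{x}[\tilde A]=\{\eta\}$ and $\tilde A\cap\mathrm{Ult}(\mathcal{I})\neq\emptyset$. Since $\tilde A$ consists of free ultrafilters containing $A$, the set $\tilde A$ is nonempty, which forces $A$ to be infinite; let $(a_n:n\in\omega)$ be its increasing enumeration. By Lemma \ref{lem:bnsubseqconvergence} (applied in the reverse direction), $\hat{x}[\tilde A]=\{\eta\}$ gives $\lim_n x_{a_n}=\eta$. By Lemma \ref{lem:bndensityzero}, $\tilde A\cap\mathrm{Ult}(\mathcal{I})\neq\emptyset$ gives $A\notin\mathcal{I}$. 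Hence $(a_n)$ witnesses that $\eta\in\Lambda_x(\mathcal{I})$, completing the proof.

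The main obstacle, such as it is, is purely one of care rather than difficulty: making sure the correspondence between ``increasing sequences in $\omega$'' and ``infinite subsets of $\omega$'' is handled cleanly, and in particular checking that $A\notin\mathcal{I}$ indeed implies $A$ infinite (using $\mathrm{Fin}\subseteq\mathcal{I}$) so that the increasing enumeration exists and Lemma \ref{lem:bnsubseqconvergence} applies. Everything else is a direct appeal to the two preliminary lemmas, exactly mirroring the structure of the proof of Theorem \ref{thm:Iclusterpoints}, except that here one does not pass to a compactness argument because the set $A$ is given outright rather than being extracted from a family of large sets.
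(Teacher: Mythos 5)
Your proof is correct and follows exactly the route the paper intends: the paper's own proof consists of the single line ``It follows by Lemma \ref{lem:bndensityzero} and Lemma \ref{lem:bnsubseqconvergence},'' and your argument is precisely the careful unpacking of those two lemmas in both directions, including the (correct) bookkeeping about infinite sets and increasing enumerations.
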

\begin{proof}
    It follows by Lemma \ref{lem:bndensityzero} and Lemma \ref{lem:bnsubseqconvergence}.
\end{proof}

Differently from the case of $\mathcal{I}$-cluster points, the behavior of $\mathcal{I}$-limit points is really wild: for instance, there exists $x \in \ell_\infty$ such that $\Lambda_x(\mathcal{Z})=\emptyset$, see \cite[Example 4]{MR1181163}; cf. also \cite{FKL24} and references therein. 
However, if the complexity of $\mathcal{I}$ is sufficiently low, then we can show that the notions of $\mathcal{I}$-cluster points and $\mathcal{I}$-limit points coincide, hence recovering \cite[Theorem 2.3]{MR3883171} in the real case: 
\begin{prop}
Let $\mathcal{I}$ be a $F_\sigma$-ideal on $\omega$. Then $\Lambda_x(\mathcal{I})=\Gamma_x(\mathcal{I})$ for all $x \in \ell_\infty$. 
\end{prop}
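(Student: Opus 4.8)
The plan is to establish the two inclusions $\Lambda_x(\mathcal{I})\subseteq\Gamma_x(\mathcal{I})$ and $\Gamma_x(\mathcal{I})\subseteq\Lambda_x(\mathcal{I})$ separately; the first holds for every ideal and is elementary, while the second is where the hypothesis that $\mathcal{I}$ is $F_\sigma$ enters.

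For $\Lambda_x(\mathcal{I})\subseteq\Gamma_x(\mathcal{I})$, suppose $\eta\in\Lambda_x(\mathcal{I})$ is witnessed by a strictly increasing sequence $(a_n:n\in\omega)$ with $B:=\{a_n:n\in\omega\}\notin\mathcal{I}$ and $\lim_n x_{a_n}=\eta$. Given $\varepsilon>0$, pick $N$ with $|x_{a_n}-\eta|\le\varepsilon$ for all $n\ge N$; then $\{a_n:n\ge N\}=B\setminus\{a_0,\ldots,a_{N-1}\}\notin\mathcal{I}$ (otherwise $B\in\mathcal{I}$, using $\mathrm{Fin}\subseteq\mathcal{I}$), and it is contained in $\{m\in\omega:|x_m-\eta|\le\varepsilon\}$, which is therefore also outside $\mathcal{I}$. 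Hence $\eta\in\Gamma_x(\mathcal{I})$. (Alternatively, this inclusion is immediate by comparing the characterizations in Theorem \ref{thm:Ilimitpoints} and Theorem \ref{thm:Iclusterpoints}: any $A$ witnessing $\eta\in\Lambda_x(\mathcal{I})$ yields, via a point of $\tilde A\cap\mathrm{Ult}(\mathcal{I})$, that $\eta\in\hat{x}[\mathrm{Ult}(\mathcal{I})]=\Gamma_x(\mathcal{I})$.)

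For the reverse inclusion, fix $\eta\in\Gamma_x(\mathcal{I})$ and set $A_k:=\{n\in\omega:|x_n-\eta|\le 1/k\}$ for $k\ge 1$, so that $(A_k)$ is decreasing and, by definition of $\mathcal{I}$-cluster point, $A_k\notin\mathcal{I}$ for all $k$. By the classical characterization of $F_\sigma$ ideals due to Mazur, there is a lower semicontinuous submeasure $\varphi:\mathcal{P}(\omega)\to[0,\infty]$ with $\mathcal{I}=\{A\subseteq\omega:\varphi(A)<\infty\}$; in particular $\varphi(A_k)=\infty$ for every $k$, while $\varphi(\{n\})<\infty$ for each $n$ (as $\mathrm{Fin}\subseteq\mathcal{I}$). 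By lower semicontinuity, $\varphi(A_k)=\sup\{\varphi(F):F\subseteq A_k\text{ finite}\}$, so we may choose for each $k$ a finite $F_k\subseteq A_k$ with $\varphi(F_k)\ge k$. Put $A:=\bigcup_{k\ge 1}F_k$. Monotonicity gives $\varphi(A)\ge\varphi(F_k)\ge k$ for all $k$, hence $\varphi(A)=\infty$ and $A\notin\mathcal{I}$; moreover $A$ is infinite, since finite sets have finite $\varphi$-value by subadditivity. Finally, since $A_j\subseteq A_k$ whenever $j\ge k$, any element of $A$ outside $A_k$ must lie in $F_1\cup\cdots\cup F_{k-1}$, so $A\setminus A_k$ is finite; letting $(a_n:n\in\omega)$ be the increasing enumeration of $A$, it follows that $\lim_n x_{a_n}=\eta$, and therefore $\eta\in\Lambda_x(\mathcal{I})$.

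The only non-routine ingredient is Mazur's representation of $F_\sigma$ ideals by lower semicontinuous submeasures; granted that, the construction of $A$ is a direct diagonalization. This is precisely the step where being $F_\sigma$ is needed: for a general ideal, a decreasing sequence $(A_k)$ with $A_k\notin\mathcal{I}$ need not admit any pseudo-intersection lying outside $\mathcal{I}$, which is exactly why $\Lambda_x(\mathcal{Z})$ can be empty while $\Gamma_x(\mathcal{Z})$ is not.
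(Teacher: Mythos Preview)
Your argument is correct. Both inclusions are handled properly: the easy one is dispatched exactly as in the paper, and for $\Gamma_x(\mathcal{I})\subseteq\Lambda_x(\mathcal{I})$ your use of Mazur's characterization of $F_\sigma$ ideals as $\mathrm{Fin}(\varphi)$ for a lower semicontinuous submeasure $\varphi$ is legitimate, and the diagonalization (choose finite $F_k\subseteq A_k$ with $\varphi(F_k)\ge k$, set $A=\bigcup_k F_k$) is clean and complete.

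The paper takes a closely related but distinct route: rather than invoking Mazur, it works directly with an $F_\sigma$ decomposition $\mathcal{I}=\bigcup_n F_n$ into closed hereditary increasing sets, and recursively builds an increasing sequence $(m_k)$ so that the finite blocks $A_k\cap[m_{k-1},m_k)$ escape $F_k$ (using closedness of $F_k$ to pass from ``$A_k\setminus[0,m_{k-1})\notin F_k$'' to some finite initial segment already outside $F_k$); the union of these blocks then lies outside every $F_k$, hence outside $\mathcal{I}$, and the resulting subsequence converges to $\eta$. Conceptually both proofs perform the same diagonalization against a countable exhaustion of $\mathcal{I}$; your version trades the hands-on interval construction for the numerical handle that the submeasure provides, which makes the write-up shorter at the cost of importing Mazur's theorem, while the paper's version is more self-contained.
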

\begin{proof}
    Fix $x \in \ell_\infty$. It is known (and it follows also by Theorem \ref{thm:Iclusterpoints} and Theorem \ref{thm:Ilimitpoints}) that every $\mathcal{I}$-limit point of $x$ is also an $\mathcal{I}$-cluster point of $x$. Hence it is sufficient to show that $\Gamma_x(\mathcal{I}) \subseteq \Lambda_x(\mathcal{I})$. 
For, fix $\eta \in \Gamma_x(\mathcal{I})$. Thanks to Theorem \ref{thm:Iclusterpoints}, there exists a free ultrafilter $\mathcal{F}_0 \in \mathrm{Ult}(\mathcal{I})$ such that $\hat{x}(\mathcal{F}_0)=\eta$. 
It follows that 
$$
A_k:=\{n\in \omega: |x_n-\eta|<2^{-k}\} \in \mathcal{F}_0
$$
for each $k \in \omega$. In particular, $A_k \notin \mathcal{I}$. 

At this point, since $\mathcal{I}$ is a $F_\sigma$-ideal, there exists a sequence $(F_n: n \in \omega)$ of closed subsets of $\mathcal{P}(\omega)$ such that $\mathcal{I}=\bigcup_n F_n$. Replacing $F_n$ with $\bigcup_{k\le n}\bigcup_{A \in F_k}\mathcal{P}(A)$, we can assume, in addition, that $F_n$ is hereditary and $F_n \subseteq F_{n+1}$ for all $n\in \omega$. 
Note also that, for each $k,n\in \omega$ and $S \in \mathrm{Fin}$, we have $A_k\setminus S \notin \mathcal{I}$, so that $A_k \setminus S \notin F_n$. 


Lastly, define recursively the increasing sequence $(m_k: k\in \omega)$ in $\omega$ such that $m_0:=0$ and, for each $k\ge 1$, $m_k$ is the smallest integer $m>m_{k-1}$ such that $A_k \cap [m_{k-1},m) \notin F_k$ (this is well defined because $B_k:=A_k \setminus [0,m_k) \notin F_k$ and $F_k$ is hereditary closed). It follows by construction that the subsequence $(x_n: n \in B)$ converges to $\eta$, where 
$
B:=\bigcup_{k}B_k.
$ 
In addition, $B\notin \mathcal{I}$: indeed, in the opposite, there would exist $k \in \omega$ such that $B \in F_k$ hence, in particular, $B_k \in F_k$ which is impossible. This proves that $\eta$ is a $\mathcal{I}$-limit point of $x$, completing the proof. 
\end{proof}


\section{Concluding Remarks}

We leave as an open question for the interested reader to characterize the class of ideals $\mathcal{I}$ on $\omega$ which satisfy the claim of Theorem \ref{thm:strongdiameter}. 

\bibliographystyle{amsplain}

\end{document}